\newcommand{\complex}{\mathbb{C}}
\newcommand{\naturalnumbers}{\mathbb{N}}
\newcommand{\integers}{\mathbb{Z}}
\newcommand{\aalg}{\mathcal{A}}
\newcommand{\rring}{\mathcal{R}}
\newcommand{\zeroa}{0_{\aalg}}
\newcommand{\unita}{1_{\aalg}}
\newcommand{\gcrossedring}{\aalg \rtimes_\alpha^{\sigma} G}
\DeclareMathOperator{\aut}{Aut}
\DeclareMathOperator{\identity}{id}
\DeclareMathOperator{\comm}{Comm}
\DeclareMathOperator{\ann}{Ann}
\DeclareMathOperator{\supp}{supp}
\DeclareMathOperator{\sep}{Sep}
\DeclareMathOperator{\inn}{inn}
\newcommand{\ida}{\identity_{\aalg}}
\theoremstyle{plain}
\newtheorem{thm}{Theorem}[section]
\newtheorem{prop}[thm]{Proposition}
\newtheorem{cor}[thm]{Corollary}
\theoremstyle{definition}
\newtheorem{defn}[thm]{Definition}
\newtheorem{exmp}[thm]{Example}
\newtheorem{rem}[thm]{Remark}
\newtheorem{case}{Case}
\begin{document}

\title{Commutativity and ideals in algebraic crossed products}
\date{January 31, 2007}

\author{Johan {\"O}inert}
\address{Department of Mathematics and Computer Science\\
University of Antwerp, Middelheimlaan 1\\
B-2020 Antwerp, Belgium {\rm and} Centre for Mathematical
Sciences, Lund University,  Box 118, SE-221 00 Lund, Sweden}
\thanks{This work was partially supported by the Crafoord Foundation,
The Royal Physiographic Society in Lund, The Swedish Royal Academy of Sciences,
The Swedish Foundation of International Cooperation in Research and Higher Education (STINT)
and "LieGrits", a Marie Curie Research Training Network funded by the European Community as project MRTN-CT 2003-505078}
\email{johan.oinert@ua.ac.be}

\author{Sergei D. Silvestrov}
\address{Centre for Mathematical
Sciences, Lund University,  Box 118, SE-221 00 Lund, Sweden}
\email{sergei.silvestrov@math.lth.se}

\subjclass[2000]{16S35, 16W50, 16D25, 16U70}
\keywords{Crossed products, graded rings, ideals, maximal commutativity}

\begin{abstract}
We investigate properties of commutative subrings
and ideals in non-commutative algebraic crossed products for actions
by arbitrary groups. A description of the commutant of the base coefficient
subring in the crossed product ring is given.
Conditions for commutativity and maximal commutativity of the
commutant of the base subring are provided in terms of the action as
well as in terms of the intersection of ideals in the crossed product
ring with the base subring, specially taking into account both the
case of base rings without non-trivial zero-divisors and the case of base rings with
non-trivial zero-divisors.
\end{abstract}

\maketitle

\section{Introduction}
The description of commutative subrings and commutative subalgebras
and of the ideals in non-commutative rings and algebras are
important directions of investigation for any class of
non-commutative algebras or rings, because it allows one to relate
representation theory, non-commutative properties, graded
structures, ideals and subalgebras, homological and other properties
of non-commutative algebras to spectral theory, duality, algebraic
geometry and topology naturally associated with the commutative
subalgebras. In representation theory, for example, one of the keys
to the construction and classification of representations is the
method of induced representations. The underlying structures behind
this method are the semi-direct products or crossed products of
rings and algebras by various actions. When a non-commutative ring
or algebra is given, one looks for a subring or a subalgebra such
that its representations can be studied and classified more easily,
and such that the whole ring  or algebra can be decomposed as a
crossed product of this subring or subalgebra by a suitable action.
Then the representations for the subring or subalgebra are extended
to representations of the whole ring or algebra using the action and
its properties. A description of representations is most tractable
for commutative subrings or subalgebras as being, via the spectral
theory and duality, directly connected to algebraic geometry,
topology or measure theory.

If one has found a way to present a non-commutative ring or algebra
as a crossed product of a commutative subring or subalgebra by some
action on it of the elements from outside the subring or subalgebra,
then it is important to know whether this subring or subalgebra is
maximal abelian or, if not, to find a maximal abelian subring or
subalgebra containing the given subalgebra, since if the selected
subring or subalgebra is not maximal abelian, then the action will
not be entirely responsible for the non-commutative part as one
would hope, but will also have the commutative trivial part taking
care of the elements commuting with everything in the selected
commutative subring or subalgebra. This maximality of a commutative
subring or subalgebra and associated properties of the action are
intimately related to the description and classifications of
representations of the non-commutative ring or algebra.

Little is known in general about connections between properties of
the commutative subalgebras of crossed product rings and algebras
and properties of the action. A remarkable result in this direction is
known, however, in the context of crossed product $C^*$-algebras.
When the algebra is described as the crossed product $C^*$-algebra
$C(X) \rtimes_{\alpha} \mathbb{Z}$ of the algebra of continuous
functions on a compact Hausdorff space $X$ by an action of
$\mathbb{Z}$ via the composition automorphism associated with a
homeomorphism $\sigma : X \to X$, it is known that $C(X)$ sits inside
the $C^*$-crossed product as a maximal abelian subalgebra if and
only if for every positive integer $n$, the set of points in $X$
having period $n$ under iterations of $\sigma$ has no interior
points \cite[Theorem 5.4]{TomiyamaNotes1}, \cite[Corollary
3.3.3]{TomiyamaBook}, \cite[Theorems 2.8, 5.2]{TomiyamaNotes2},
\cite[Proposition 4.14]{Zeller-Meier}, \cite[Lemma
7.3.11]{LiBing-Ren}. This condition is equivalent to the action of
$\mathbb Z$ on $X$ being topologically free in the sense that the
non-periodic points of $\sigma$ are dense in $X$. In
\cite{svesildej}, a purely algebraic variant of the crossed product
allowing for more general classes of algebras than merely continuous
functions on compact Hausdorff spaces serving as ``base coefficient
algebras'' in the crossed products was considered. In the general
set theoretical framework of a crossed product algebra
$A \rtimes_{\alpha}\mathbb{Z}$ of an arbitrary subalgebra $A$ of
the algebra $\complex^X$ of complex-valued functions on a set $X$
(under the usual pointwise operations) by $\mathbb{Z}$ acting on $A$ via
a composition automorphism defined by a bijection of $X$, the
essence of the matter is revealed. Topological notions are not
available here and thus the condition of freeness of the dynamics as
described above is not applicable, so that it has to be generalized
in a proper way in order to be equivalent to the maximal
commutativity of $A$. In \cite{svesildej} such a generalization was
provided by involving separation properties of $A$ with respect to
the space $X$ and the action for significantly more arbitrary
classes of base coefficient algebras and associated spaces and
actions. The (unique) maximal abelian subalgebra containing $A$ was
described as well as general results and examples and
counterexamples on equivalence of maximal commutativity of $A$ in
the crossed product and the generalization of topological freeness
of the action.

In this article, we bring these results and interplay into a more
general algebraic context of crossed product rings (or algebras)
for crossed systems with arbitrary group actions and twisting
cocycle maps \cite{MoGR}. We investigate the connections with the
ideal structure of a general crossed product ring, describe
the center of crossed product rings,
describe the
commutant of the base coefficient subring in a crossed product
ring of a general crossed system, and obtain conditions for maximal commutativity of the commutant of the base
subring in terms of the action as well as in terms of intersection
of ideals in the crossed product ring with the base subring,
specially taking into account both the case of base rings without non-trivial
zero-divisors and the case of base rings with non-trivial zero-divisors.

\section{Preliminaries}
In this section, we recall the basic objects and the notation, from \cite{MoGR}, which are necessary for the understanding of the rest of this article.

\subsection*{Gradings}
Let $G$ be a group with unit element $e$. The ring $\rring$ is \emph{$G$-graded} if there is a family $\{\rring_\sigma\}_{\sigma \in G}$ of additive subgroups $\rring_\sigma$ of $\rring$ such that $\rring=\bigoplus_{\sigma \in G} \rring_\sigma$ and $\rring_\sigma \rring_\tau \subseteq \rring_{\sigma \tau}$ (\emph{strongly $G$-graded} if, in addition, $\supseteq$ also holds) for every $\sigma, \tau \in G$.

\subsection*{Crossed products}
If $\rring$ is a unital ring, then $U(\rring)$ denotes the group of multiplication invertible elements of $\rring$. A unital $G$-graded ring $\rring$ is called a \emph{$G$-crossed product} if $U(\rring)\cap \rring_\sigma \neq \emptyset$ for every $\sigma \in G$. Note that every $G$-crossed product is strongly $G$-graded, as explained in \cite[p.2]{MoGR}.

\subsection{Crossed systems}\label{crossedsystem}
\begin{defn}
A $G$-crossed system is a quadruple $\{\aalg,G,\sigma,\alpha\}$, consisting of a unital associative ring $\aalg$, a group $G$ (with unit element $e$), a map
\begin{displaymath}
\sigma : G \to \aut(\aalg)
\end{displaymath}
and a \emph{$\sigma$-cocycle} map
\begin{displaymath}
\alpha : G \times G \to U(\aalg)
\end{displaymath}
such that for any $x,y,z\in G$ and $a\in \aalg$ the following conditions hold:
\begin{itemize}
\item[(i)] $\sigma_x(\sigma_y(a)) = \alpha(x,y) \, \sigma_{xy}(a) \, \alpha(x,y)^{-1}$ \,;
\item[(ii)] $\alpha(x,y) \, \alpha(xy,z) = \sigma_x(\alpha(y,z)) \, \alpha(x,yz)$ \,;
\item[(iii)] $\alpha(x,e)=\alpha(e,x)=\unita$ \, .
\end{itemize}
\end{defn}

\begin{rem}\label{zeroandone}
Note that, by combining conditions (i) and (iii), we get $\sigma_e(\sigma_e(a))=\sigma_e(a)$ for all $a\in \aalg$. Furthermore, $\sigma_e : \aalg \to \aalg$ is an automorphism and hence $\sigma_e = \identity_{\aalg}$. Also note that, from the definition of $\aut(\aalg)$, we have $\sigma_g(\zeroa)=\zeroa$ and $\sigma_g(\unita)=\unita$ for any $g \in G$.
\end{rem}

\begin{rem}\label{conditionsAcomm}
From condition (i) it immediately follows that $\sigma$ is a group homomorphism if $\aalg$ is commutative or if $\alpha$ is trivial.
\end{rem}

\begin{defn}
Let $\overline{G}$ be a copy (as a set) of $G$. Given a $G$-crossed system $\{\aalg,G,\sigma,\alpha\}$, we denote by $\gcrossedring$ the free left $\aalg$-module having $\overline{G}$ as its basis and in which the multiplication is defined by
\begin{eqnarray}\label{leftmodulemultiplication}
(a_1 \overline{x})(a_2 \overline{y}) = a_1 \sigma_x(a_2) \, \alpha(x,y) \, \overline{xy}
\end{eqnarray}
for all $a_1,a_2 \in \aalg$ and $x,y \in G$. Elements of $\gcrossedring$ may be expressed as formal sums $\sum_{g\in G} a_g \overline{g}$ where $a_g \in \aalg$ and $a_g = \zeroa$ for all but a finite number of $g \in G$. 
Explicitly, this means that the addition and multiplication of two arbitrary elements $\sum_{s\in G} a_s \overline{s}, \sum_{t\in G} b_t \overline{t} \in \gcrossedring$ is given by
\begin{eqnarray}
\sum_{s\in G} a_s \overline{s} + \sum_{t\in G} b_t \overline{t} &=& \sum_{g\in G} (a_g + b_g) \overline{g} \nonumber \\
\left( \sum_{s\in G} a_s \overline{s} \right) \left( \sum_{t\in G} b_t \overline{t} \right) &=& \sum_{(s,t) \in G \times G} (a_s \overline{s})(b_t \overline{t}) =
\sum_{(s,t) \in G \times G} a_s \, \sigma_s(b_t) \, \alpha(s,t) \, \overline{st} \nonumber \\ &=& \sum_{g\in G} \left( \sum_{\{(s,t) \in G \times G \mid st=g\}} a_s \, \sigma_s(b_t) \, \alpha(s,t) \right) \overline{g} \label{product}.
\end{eqnarray}
\end{defn}

\begin{rem}
The ring $\aalg$ is unital, with unit element $\unita$, and it is easy to see that $(\unita \, \overline{e})$ is the multiplicative identity in $\gcrossedring$.
\end{rem}

By abuse of notation, we shall sometimes let $0$ denote the zero element in $\gcrossedring$. The proofs of the two following propositions can be found in \cite[Proposition 1.4.1, p.11]{MoGR} and \cite[Proposition 1.4.2, pp.12-13]{MoGR} respectively.

\begin{prop}
Let $\{\aalg,G,\sigma,\alpha\}$ be a $G$-crossed system. Then $\gcrossedring$ is an associative ring (with the multiplication defined in \eqref{leftmodulemultiplication}). Moreover, this ring is $G$-graded,  $\gcrossedring = \bigoplus_{g\in G} \, \aalg \overline{g}$, and it is a $G$-crossed product.
\end{prop}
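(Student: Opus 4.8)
The plan is to check the three claims in sequence; the only substantial point is associativity, and everything else is bookkeeping. Since $\gcrossedring$ is by definition free as a left $\aalg$-module on the basis $\overline{G}$, formula \eqref{product} defines the product unambiguously, and one reads off at once from it that the product is biadditive in each argument and left $\aalg$-linear in the first. Hence, writing arbitrary elements as finite sums of ``monomials'' $a\,\overline{g}$ and expanding, it will suffice to verify the associative law on monomials $p=a_1\overline{x}$, $q=a_2\overline{y}$, $r=a_3\overline{z}$. Applying \eqref{leftmodulemultiplication} twice on each side gives
\[
(pq)r = a_1\,\sigma_x(a_2)\,\alpha(x,y)\,\sigma_{xy}(a_3)\,\alpha(xy,z)\,\overline{xyz},\qquad
p(qr) = a_1\,\sigma_x(a_2)\,\sigma_x\bigl(\sigma_y(a_3)\bigr)\,\sigma_x\bigl(\alpha(y,z)\bigr)\,\alpha(x,yz)\,\overline{xyz},
\]
so it remains to identify the two coefficients in $\aalg$. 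Rewriting $\sigma_x(\sigma_y(a_3))$ via condition (i) of the crossed-system axioms turns the coefficient of $p(qr)$ into $a_1\,\sigma_x(a_2)\,\alpha(x,y)\,\sigma_{xy}(a_3)\,\alpha(x,y)^{-1}\,\sigma_x(\alpha(y,z))\,\alpha(x,yz)$; then condition (ii), in the form $\alpha(x,y)^{-1}\sigma_x(\alpha(y,z))\,\alpha(x,yz)=\alpha(xy,z)$, collapses this to the coefficient of $(pq)r$. This computation --- the interplay of (i) and (ii) --- is the heart of the proof and the step I expect to require the most care; once it is done, associativity on all of $\gcrossedring$ follows by distributivity.

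For the grading, set $\rring_g=\aalg\overline{g}$. Each $\rring_g$ is an additive subgroup, $\gcrossedring=\bigoplus_{g\in G}\aalg\overline{g}$ as abelian groups precisely because $\overline{G}$ is a free basis, and $\rring_x\rring_y\subseteq\rring_{xy}$ is immediate from \eqref{leftmodulemultiplication}, since $(a_1\overline{x})(a_2\overline{y})=a_1\sigma_x(a_2)\alpha(x,y)\overline{xy}\in\aalg\overline{xy}$. Thus $\gcrossedring$ is $G$-graded; it is unital with identity $\unita\overline{e}$ by the preceding remark. To see it is a $G$-crossed product, fix $g\in G$ and consider $\unita\overline{g}\in\rring_g$. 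Using \eqref{leftmodulemultiplication}, the fact that $\sigma_g(\unita)=\unita$, and that $\alpha$ takes values in $U(\aalg)$, one checks that $\sigma_g^{-1}\bigl(\alpha(g,g^{-1})^{-1}\bigr)\,\overline{g^{-1}}$ is a right inverse of $\unita\overline{g}$ and that $\alpha(g^{-1},g)^{-1}\,\overline{g^{-1}}$ is a left inverse; since $\gcrossedring$ is associative, these one-sided inverses coincide, so $\unita\overline{g}\in U(\gcrossedring)\cap\rring_g\neq\emptyset$. As $g$ was arbitrary, $\gcrossedring$ is a $G$-crossed product.

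In summary, the only real work will be the cancellation using the cocycle identities (i) and (ii); the grading and crossed-product assertions are then formal. The single conceptual caution is to confirm that the multiplication is well defined on the free module --- which is automatic by freeness --- before reducing associativity to the case of monomials.
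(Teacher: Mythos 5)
Your proof is correct: the cocycle cancellation via conditions (i) and (ii) is carried out accurately, and the unit, grading, and the two one-sided inverses of $\unita\,\overline{g}$ are all verified properly. The paper itself gives no proof here but cites \cite[Proposition 1.4.1]{MoGR}, and your argument is essentially the same standard direct verification given there, so nothing further is needed.
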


\begin{prop}
Every $G$-crossed product $\rring$ is of the form $\gcrossedring$ for some ring $\aalg$ and some maps $\sigma,\alpha$.
\end{prop}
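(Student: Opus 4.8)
The plan is to prove that every $G$-crossed product $\rring$ arises as $\gcrossedring$ for a suitable crossed system $\{\aalg, G, \sigma, \alpha\}$. First I would set $\aalg := \rring_e$, the homogeneous component of degree $e$; since $\rring$ is $G$-graded and $\rring_e \rring_e \subseteq \rring_e$ with $1_\rring \in \rring_e$ (as $1_\rring$ is homogeneous of degree $e$ by a standard grading argument), $\aalg$ is a unital associative ring. Because $\rring$ is a $G$-crossed product, for each $g \in G$ I may choose an element $u_g \in U(\rring) \cap \rring_g$; I would normalize the choice so that $u_e = 1_\rring$. The key observation to establish early is that each $u_g$ is a free generator of $\rring_g$ as a left $\aalg$-module: the map $a \mapsto a u_g$ from $\aalg$ to $\rring_g$ is surjective because $\rring$ is strongly graded (so $\rring_g = \rring_g \rring_e \supseteq$ actually one uses $\rring_e = \rring_g \rring_{g^{-1}}$ to write any $r \in \rring_g$ as $r u_g^{-1} u_g$ with $r u_g^{-1} \in \rring_g \rring_{g^{-1}} = \rring_e$), and injective because $u_g$ is invertible in $\rring$. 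Hence $\rring = \bigoplus_g \rring_g = \bigoplus_g \aalg u_g$ is free as a left $\aalg$-module with basis $\{u_g\}_{g \in G}$.

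Next I would define the structure maps. For $g \in G$, conjugation by $u_g$ maps $\rring_e$ to $\rring_e$ (since $u_g \rring_e u_g^{-1} \subseteq \rring_g \rring_e \rring_{g^{-1}} = \rring_e$), and since $u_g$ is invertible this is an automorphism of $\aalg$; set $\sigma_g(a) := u_g a u_g^{-1}$. For $x, y \in G$, the element $u_x u_y$ lies in $\rring_{xy} = \aalg u_{xy}$, so $u_x u_y = \alpha(x,y) u_{xy}$ for a unique $\alpha(x,y) \in \aalg$; and $\alpha(x,y)$ is invertible in $\aalg$ because $u_x u_y$ is invertible in $\rring$ and $u_{xy}^{-1} \in \rring_{(xy)^{-1}} = \aalg u_{(xy)^{-1}}$ lets one exhibit a two-sided inverse inside $\aalg u_e = \aalg$. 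I would then verify the three crossed-system axioms: (iii) follows from the normalization $u_e = 1_\rring$; (ii) is the associativity identity $(u_x u_y) u_z = u_x (u_y u_z)$ rewritten in terms of $\alpha$ and $\sigma$; and (i) comes from computing $u_x u_y a u_y^{-1} u_x^{-1}$ in two ways, namely as $\sigma_x(\sigma_y(a))$ and, after inserting $u_{xy}^{-1} u_{xy}$, as $\alpha(x,y) \sigma_{xy}(a) \alpha(x,y)^{-1}$.

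Finally I would check that the map $\Phi : \gcrossedring \to \rring$ sending $\sum_g a_g \overline{g}$ to $\sum_g a_g u_g$ is a ring isomorphism. It is a bijective left $\aalg$-module homomorphism by the freeness established above, so only multiplicativity needs checking, and it suffices to check it on basis elements: $\Phi((a_1 \overline{x})(a_2 \overline{y})) = \Phi(a_1 \sigma_x(a_2) \alpha(x,y) \overline{xy}) = a_1 \sigma_x(a_2) \alpha(x,y) u_{xy}$, while $\Phi(a_1 \overline{x}) \Phi(a_2 \overline{y}) = a_1 u_x a_2 u_y = a_1 (u_x a_2 u_x^{-1}) u_x u_y = a_1 \sigma_x(a_2) \alpha(x,y) u_{xy}$, and the two agree. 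The main obstacle — really the only place requiring care — is the bookkeeping around invertibility: one must repeatedly use the $G$-grading together with invertibility of each $u_g$ in the \emph{ambient} ring $\rring$ to conclude that the relevant elements ($\alpha(x,y)$, the would-be inverses of $u_g$, the coefficients obtained when expanding a product) actually live in the degree-$e$ component $\aalg$ and are invertible \emph{there}; the normalization $u_e = 1_\rring$ is what makes axiom (iii) and the unitality of $\Phi$ come out cleanly, so I would fix it at the outset.
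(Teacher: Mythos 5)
Your argument is correct and is essentially the proof the paper relies on: the paper does not prove this proposition itself but cites \cite[Proposition 1.4.2]{MoGR}, where the standard construction is exactly yours --- take $\aalg=\rring_e$, pick homogeneous units $u_g$ with $u_e=1$, define $\sigma_g$ by conjugation and $\alpha(x,y)$ by $u_xu_y=\alpha(x,y)u_{xy}$, and verify the crossed-system axioms and the isomorphism $\sum_g a_g\overline{g}\mapsto\sum_g a_gu_g$. The only step you use without stating it explicitly (and which your closing remark implicitly covers) is that the inverse of a homogeneous unit of degree $g$ is homogeneous of degree $g^{-1}$, which is what puts $ru_g^{-1}$, $u_ga u_g^{-1}$ and $\alpha(x,y)^{-1}$ in degree $e$.
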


\begin{rem}
If $k$ is a field and $\aalg$ is a $k$-algebra, then so is $\gcrossedring$.
\end{rem}

\noindent The base coefficient ring $\aalg$ is naturally embedded as a subring into $\gcrossedring$. Consider the canonical isomorphism
\begin{displaymath}
\iota : \aalg \hookrightarrow \gcrossedring, \quad a \mapsto a \overline{e}.
\end{displaymath}
We denote by $\tilde{\aalg}$ the image of $\aalg$ under $\iota$ and by $\aalg^G = \{ a\in \aalg \, \mid \, \sigma_s(a)=a, \,\, \forall s\in G\}$ the \emph{fixed ring} of $\aalg$.

\begin{rem}\label{AcommAtildecomm}
Obviously, $\aalg$ is commutative if and only if $\tilde{\aalg}$ is commutative.
\end{rem}

\begin{exmp}
Let $\aalg$ be commutative and $\mathcal{B}=\gcrossedring$ a crossed product. For $x\in G$ and $c,d \in \aalg$ we may write
\begin{eqnarray*}
(c \, \overline{x})(d \, \overline{e}) = c \, \sigma_x(d) \, \overline{x} = (\sigma_x(d) \, \overline{e})(c \, \overline{x}).
\end{eqnarray*}
Let $b=c \,\overline{x}$, $a = d\overline{e}$ and $f : \mathcal{B} \to \mathcal{B}$ be a map defined by $f = \iota \circ \sigma_x \circ \iota^{-1}$. Then the above relation may be written as
\begin{displaymath}
b \, a = f(a) \, b
\end{displaymath}
which is a re-ordering formula frequently appearing in physical applications.
\end{exmp}

\section{Commutativity in $\gcrossedring$}

From the definition of the product in $\gcrossedring$, given by \eqref{product}, we see that two elements $\sum_{s\in G} a_s \overline{s}$ and $\sum_{t\in G} b_t \overline{t}$ commute if and only if
\begin{equation}\label{twoelementscommute}
\sum_{\{(s,t) \in G \times G \mid st=g\} } a_s \, \sigma_s(b_t) \, \alpha(s,t) = \sum_{\{(s,t) \in G \times G \mid st=g\}} b_s \, \sigma_s(a_t) \, \alpha(s,t)
\end{equation}
for each $g \in G$. The crossed product $\gcrossedring$ is in general non-commutative and in the following proposition we give a description of its center.

\begin{prop}\label{thecenter}
The center of $\gcrossedring$ is as follows
\begin{eqnarray*}
        Z(\gcrossedring) &=& \Big\{ \sum_{g\in G} r_g \, \overline{g} \,\, \Big\lvert \,\, r_{ts^{-1}} \, \alpha(ts^{-1},s) = \sigma_s(r_{s^{-1}t}) \, \alpha(s,s^{-1}t),\\
        & & \hspace{55pt} r_s \, \sigma_s(a) = a \, r_s, \quad \forall a\in \aalg, \,\, (s,t)\in G \times G \Big\}.
\end{eqnarray*}
\end{prop}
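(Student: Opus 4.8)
The plan is to characterize membership in the center by testing an arbitrary element $r = \sum_{g \in G} r_g \overline{g}$ against two families of elements that together generate $\gcrossedring$ as a ring: the embedded scalars $a\overline{e}$ with $a \in \aalg$, and the ``monomials'' $\unita \overline{s}$ with $s \in G$. Since $\gcrossedring$ is the free left $\aalg$-module on $\overline{G}$ and $a\overline{s} = (a\overline{e})(\unita\overline{s})$, an element is central if and only if it commutes with every $a\overline{e}$ and with every $\unita\overline{s}$; this reduces the problem to two manageable computations using only the basic multiplication rule \eqref{leftmodulemultiplication}.

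First I would compute $r\,(a\overline{e})$ and $(a\overline{e})\,r$ for $a \in \aalg$. Using \eqref{leftmodulemultiplication} and $\alpha(s,e) = \alpha(e,s) = \unita$ (condition (iii)), one gets $r\,(a\overline{e}) = \sum_{s} r_s \,\sigma_s(a)\,\overline{s}$ and $(a\overline{e})\,r = \sum_s a\, r_s \,\overline{s}$. Comparing coefficients of $\overline{s}$ for each $s \in G$ yields exactly the condition $r_s\,\sigma_s(a) = a\, r_s$ for all $a \in \aalg$ and all $s \in G$. Next I would compute $r\,(\unita\overline{s})$ and $(\unita\overline{s})\,r$. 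Again by \eqref{leftmodulemultiplication}, $r\,(\unita\overline{s}) = \sum_{x} r_x\,\alpha(x,s)\,\overline{xs}$ and $(\unita\overline{s})\,r = \sum_{y} \sigma_s(r_y)\,\alpha(s,y)\,\overline{sy}$. To compare coefficients of a fixed $\overline{g}$, I substitute $x = g s^{-1}$ in the first sum and $y = s^{-1} g$ in the second, obtaining the requirement $r_{gs^{-1}}\,\alpha(gs^{-1},s) = \sigma_s(r_{s^{-1}g})\,\alpha(s,s^{-1}g)$ for all $s, g \in G$; renaming $g$ as $t$ gives the first displayed condition. Conversely, if both families of conditions hold, then reversing these computations shows $r$ commutes with all $a\overline{e}$ and all $\unita\overline{s}$, hence with all finite sums of products $a\overline{s}$, i.e.\ with all of $\gcrossedring$; so $r \in Z(\gcrossedring)$.

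The only mildly delicate point is the bookkeeping in the second computation: one must be careful that the index substitutions $x = gs^{-1}$ and $y = s^{-1}g$ are genuine bijections of $G$ (which they are, $s$ being fixed and invertible in the group), so that matching the $\overline{g}$-coefficients is legitimate term by term. There is also a small consistency check worth noting: the first displayed family of relations, with $s$ and $t$ ranging over $G$, is self-consistent because the $\sigma$-cocycle identity (ii) guarantees compatibility when one composes two such relations; but strictly speaking this is not needed for the proof, since we only assert the set-theoretic description of $Z(\gcrossedring)$ and do not need to exhibit nonzero central elements. Thus the argument is essentially a direct ``test against generators'' computation, and no genuine obstacle arises beyond careful indexing.
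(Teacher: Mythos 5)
Your proposal is correct and follows essentially the same route as the paper: both directions reduce to testing the candidate central element against $a\overline{e}$ ($a\in\aalg$) and $\unita\,\overline{s}$ ($s\in G$), and your coefficient comparisons and index substitutions match the paper's computations. The only difference is in the converse, where the paper verifies commutation with an arbitrary element $\sum_{s} a_s\overline{s}$ by an explicit calculation using both conditions, whereas you invoke the (equally valid) observation that the centralizer of the element is a subring containing every $a\overline{e}$ and $\unita\,\overline{s}$, hence every $a\overline{s}=(a\overline{e})(\unita\,\overline{s})$ and all finite sums thereof.
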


\begin{proof}
Let $\sum_{g\in G} r_g \, \overline{g} \in \gcrossedring$ be an element which commutes with every element of $\gcrossedring$. Then, in particular $\sum_{g\in G} r_g \, \overline{g}$ must commute with $a \, \overline{e}$ for every $a \in \aalg$. From \eqref{twoelementscommute} we immediately see that this implies $r_s \, \sigma_s(a)=a \, r_s$ for every $a\in \aalg$ and $s\in G$. Furthermore, $\sum_{g\in G} r_g \, \overline{g}$ must commute with $\unita \, \overline{s}$ for any $s \in G$. This yields
\begin{eqnarray*}
\sum_{t\in G} r_{ts^{-1}} \, \alpha(ts^{-1},s) \, \overline{t} = [gs= t] = \sum_{g\in G} r_g \, \alpha(g,s) \, \overline{gs}
= \sum_{g \in G} r_g \, \sigma_g(\unita) \, \alpha(g,s) \, \overline{gs}\\ = \left( \sum_{g \in G} r_g \, \overline{g} \right) (\unita \, \overline{s}) = (\unita \, \overline{s}) \left( \sum_{g \in G} r_g \, \overline{g} \right)
= \sum_{g \in G} \unita \, \sigma_s (r_g) \, \alpha(s,g) \, \overline{sg} \\ = \sum_{g \in G} \sigma_s (r_g) \, \alpha(s,g) \, \overline{sg} = [t=sg] = \sum_{t \in G} \sigma_{s} (r_{s^{-1}t}) \, \alpha(s,s^{-1}t) \, \overline{t}
\end{eqnarray*}
and hence, for each $(s,t) \in G \times G$, we have $r_{ts^{-1}}  \, \alpha(ts^{-1},s) = \sigma_s(r_{s^{-1}t}) \, \alpha(s,s^{-1}t)$.\\
Conversely, suppose that $\sum_{g\in G} r_g \, \overline{g} \in \gcrossedring$ is an element satisfying $r_s \, \sigma_s(a) = a \, r_s$ and $r_{ts^{-1}}  \, \alpha(ts^{-1},s) = \sigma_s(r_{s^{-1}t}) \, \alpha(s,s^{-1}t)$ for every $a\in \aalg$ and $(s,t)\in G\times G$. Let $\sum_{s \in G} a_s \, \overline{s} \in \gcrossedring$ be arbitrary. Then
\begin{eqnarray*}
\left( \sum_{g \in G} r_g \, \overline{g} \right) \left(\sum_{s \in G} a_s \, \overline{s} \right) = \sum_{(g,s) \in G \times G} r_g \, \sigma_g(a_s) \, \alpha(g,s) \, \overline{gs}
= \sum_{(g,s) \in G \times G} a_s \, r_g \, \alpha(g,s) \, \overline{gs} \\
= [gs=t] = \sum_{(t,s) \in G \times G} a_s \, (r_{ts^{-1}} \, \alpha(ts^{-1},s)) \, \overline{t}
= \sum_{(t,s) \in G \times G} a_s \, \sigma_s(r_{s^{-1}t}) \, \alpha(s,s^{-1}t) \, \overline{t} \\
= [t=sg] = \sum_{(g,s) \in G \times G} a_s \, \sigma_s(r_{g}) \, \alpha(s,g) \, \overline{sg}
= \left(\sum_{s \in G} a_s \, \overline{s} \right) \left( \sum_{g \in G} r_g \, \overline{g} \right)
\end{eqnarray*}
and hence $\sum_{g \in G} r_g \, \overline{g}$ commutes with every element of $\gcrossedring$.
\end{proof}

A few corollaries follow from Proposition \ref{thecenter}, showing how a successive addition of restrictions on the corresponding $G$-crossed system, leads to a simplified description of $Z(\gcrossedring)$.

\begin{cor}[Center of a twisted group ring]
Let $\sigma \equiv \ida$. Then, the center of $\gcrossedring$ is as follows
\begin{eqnarray*}
        Z(\gcrossedring) &=& \Big\{ \sum_{g\in G} r_g \, \overline{g} \,\, \Big\vert \,\, r_s \in Z(\aalg), \quad r_{ts^{-1}}  \, \alpha(ts^{-1},s) = r_{s^{-1}t} \, \alpha(s,s^{-1}t), \\
        & & \hspace{170pt} \forall a\in \aalg, \,\, (s,t)\in G \times G \Big\}.
\end{eqnarray*}
\end{cor}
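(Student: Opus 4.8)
The plan is to obtain this as an immediate specialization of Proposition \ref{thecenter}. Saying $\sigma \equiv \ida$ means precisely that $\sigma_g = \identity_{\aalg}$ for every $g \in G$, so I would take an arbitrary element $\sum_{g\in G} r_g \, \overline{g} \in \gcrossedring$ and simply rewrite the two defining conditions of $Z(\gcrossedring)$ furnished by Proposition \ref{thecenter} after substituting $\sigma_s = \identity_{\aalg}$ everywhere.

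First, the relation $r_{ts^{-1}} \, \alpha(ts^{-1},s) = \sigma_s(r_{s^{-1}t}) \, \alpha(s,s^{-1}t)$ collapses, since $\sigma_s$ is the identity, to $r_{ts^{-1}} \, \alpha(ts^{-1},s) = r_{s^{-1}t} \, \alpha(s,s^{-1}t)$, which is exactly the first condition in the claimed description. Second, the relation $r_s \, \sigma_s(a) = a \, r_s$ becomes $r_s \, a = a \, r_s$, and demanding this for all $a \in \aalg$ is, by definition, the assertion $r_s \in Z(\aalg)$. Combining these two observations yields the stated equality of sets; note that the quantifier ``$\forall a \in \aalg$'' in the claimed description is then vacuous (it has been absorbed into the requirement $r_s \in Z(\aalg)$) and is retained only for uniformity with Proposition \ref{thecenter}.

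There is essentially no obstacle here beyond this bookkeeping. The one point one might pause on is whether the specialization $\sigma \equiv \ida$ is consistent with the crossed system axioms: indeed axiom (i) then forces $\alpha(x,y) \in Z(\aalg)$ for all $x,y \in G$, but this extra information is not needed for the statement and is best left as a side remark. As an alternative to invoking Proposition \ref{thecenter}, one could simply re-run the short computation in its proof verbatim, replacing each occurrence of $\sigma_s$ by $\identity_{\aalg}$; this produces the same two conditions directly.
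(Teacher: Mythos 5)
Your proposal is correct and matches the paper's route: the corollary is stated as an immediate specialization of Proposition \ref{thecenter}, obtained exactly as you do by setting $\sigma_s = \ida$ in the two defining conditions, so that $r_s \, \sigma_s(a) = a\, r_s$ becomes $r_s \in Z(\aalg)$ and the twisted relation loses its $\sigma_s$. Your side remark that axiom (i) then forces $\alpha(x,y) \in Z(\aalg)$ is accurate but, as you note, not needed.
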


\begin{cor}
Let $G$ be abelian and $\alpha$ symmetric\footnote{Symmetric in the sense that $\alpha(x,y)=\alpha(y,x)$ for every $(x,y) \in G \times G$.}. Then, the center of $\gcrossedring$ is as follows
\begin{eqnarray*}
        Z(\gcrossedring) = \Big\{ \sum_{g\in G} r_g \, \overline{g} \,\, \Big\lvert \,\, r_s \, \sigma_s(a) = a \, r_s, \quad r_s \in \aalg^G, \quad \forall a\in \aalg, \,\, s\in G \Big\}.
\end{eqnarray*}
\end{cor}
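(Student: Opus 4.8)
The plan is to deduce this corollary directly from Proposition~\ref{thecenter} by simplifying its two defining conditions under the extra hypotheses that $G$ is abelian and $\alpha$ is symmetric. The condition $r_s\,\sigma_s(a)=a\,r_s$ (for all $a\in\aalg$, $s\in G$) already appears verbatim in the desired description, so nothing needs to be done with it; all the work lies in showing that, under the stated hypotheses, the remaining condition
\[
r_{ts^{-1}}\,\alpha(ts^{-1},s)=\sigma_s(r_{s^{-1}t})\,\alpha(s,s^{-1}t),\qquad \forall\,(s,t)\in G\times G,
\]
is equivalent to requiring $r_s\in\aalg^G$ for every $s\in G$.

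First I would use that $G$ is abelian to rewrite $ts^{-1}=s^{-1}t$, so the left-hand side becomes $r_{ts^{-1}}\,\alpha(ts^{-1},s)$ and the right-hand side becomes $\sigma_s(r_{ts^{-1}})\,\alpha(s,s^{-1}t)$. Using commutativity of $G$ once more together with the symmetry of $\alpha$, one has $\alpha(ts^{-1},s)=\alpha(s^{-1}t,s)=\alpha(s,s^{-1}t)$; denote this common element by $u\in U(\aalg)$. The condition then reads $r_{ts^{-1}}\,u=\sigma_s(r_{ts^{-1}})\,u$, and since $u$ is invertible in $\aalg$ we may cancel it on the right to obtain the equivalent condition $r_{ts^{-1}}=\sigma_s(r_{ts^{-1}})$ for all $(s,t)\in G\times G$.

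Next I would reindex: for a fixed $g\in G$ and an arbitrary $s\in G$, taking $t=gs$ gives $ts^{-1}=g$, so the previous displayed condition yields $r_g=\sigma_s(r_g)$. As $s$ ranges over all of $G$, this says precisely that $r_g\in\aalg^G$, and since $g$ was arbitrary we get $r_s\in\aalg^G$ for every $s\in G$. Conversely, if $r_s\in\aalg^G$ for all $s$, then $\sigma_s(r_{ts^{-1}})=r_{ts^{-1}}=r_{s^{-1}t}$ and $\alpha(ts^{-1},s)=\alpha(s,s^{-1}t)$ as above, so the condition of Proposition~\ref{thecenter} holds. Combining this with the unchanged condition $r_s\,\sigma_s(a)=a\,r_s$ produces exactly the description in the statement.

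I do not expect a genuine obstacle here; this is a routine specialization of Proposition~\ref{thecenter}. The only points requiring mild care are the twofold use of commutativity of $G$ (once to identify the indices $ts^{-1}$ and $s^{-1}t$, once to rewrite $\alpha(ts^{-1},s)$ via symmetry), the appeal to invertibility of the cocycle value to cancel $u$, and checking that the reindexing $t\mapsto ts^{-1}$ sweeps out all of $G$ so that the fixed-ring condition is obtained for every homogeneous component.
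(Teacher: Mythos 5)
Your proposal is correct and matches the paper's intent: the paper states this corollary without proof as an immediate specialization of Proposition~\ref{thecenter}, and your argument (using commutativity of $G$ to identify the indices $ts^{-1}$ and $s^{-1}t$, symmetry of $\alpha$ to equate the cocycle factors, and invertibility of $\alpha(ts^{-1},s)\in U(\aalg)$ to cancel and obtain $r_g=\sigma_s(r_g)$ for all $s$) is exactly the intended routine verification in both directions.
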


\begin{cor}\label{centerspecial}
Let $\aalg$ be commutative, $G$ abelian and $\alpha \equiv \unita$. Then, the center of $\gcrossedring$ is as follows
\begin{eqnarray*}
        Z(\gcrossedring) = \Big\{ \sum_{g\in G} r_g \, \overline{g} \,\, \Big\vert \,\, r_s \in \aalg^G, \,\, \sigma_s(a) - a \in \ann(r_s), \,\, \forall a\in \aalg, \, s\in G \Big\}.
\end{eqnarray*}
\end{cor}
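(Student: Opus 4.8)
The plan is to obtain this as a direct specialization of Proposition \ref{thecenter}: substitute the three hypotheses ($\aalg$ commutative, $G$ abelian, $\alpha \equiv \unita$) into the two conditions cutting out the center and simplify each. One could equally well start from the preceding corollary, since $\alpha \equiv \unita$ is in particular symmetric; that would make the first step below superfluous, but I prefer to carry out the full reduction for self-containedness.

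First I would simplify the cocycle relation $r_{ts^{-1}}\,\alpha(ts^{-1},s) = \sigma_s(r_{s^{-1}t})\,\alpha(s,s^{-1}t)$. With $\alpha \equiv \unita$ every factor $\alpha(\cdot,\cdot)$ equals $\unita$, so it becomes $r_{ts^{-1}} = \sigma_s(r_{s^{-1}t})$; since $G$ is abelian, $ts^{-1} = s^{-1}t$, so the relation reads $r_{s^{-1}t} = \sigma_s(r_{s^{-1}t})$ for all $(s,t) \in G\times G$. Holding $s$ fixed and letting $t$ run through $G$, the element $g = s^{-1}t$ runs through all of $G$; hence this condition is equivalent to $\sigma_s(r_g) = r_g$ for all $s,g \in G$, i.e. to $r_g \in \aalg^G$ for every $g \in G$.

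Next I would rewrite the second condition $r_s\,\sigma_s(a) = a\,r_s$. Since $\aalg$ is commutative, $a\,r_s = r_s\,a$, so the condition says $r_s\bigl(\sigma_s(a) - a\bigr) = \zeroa$, and commuting the two factors once more, $\bigl(\sigma_s(a) - a\bigr)\,r_s = \zeroa$; that is, $\sigma_s(a) - a \in \ann(r_s)$. Conjoining this with the reformulation from the previous paragraph yields precisely the asserted description of $Z(\gcrossedring)$. The reverse inclusion is obtained by reading the same chain of equivalences backwards and invoking the ``conversely'' half of Proposition \ref{thecenter}.

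I do not expect any genuine obstacle here; the argument is routine bookkeeping. The only two points deserving a little care are the re-indexing in the first step (verifying that $g = s^{-1}t$ exhausts $G$ as $t$ does, with $s$ held fixed, so that the per-pair condition really becomes the condition $r_g \in \aalg^G$), and fixing the convention for $\ann(r_s)$ — here the annihilator ideal $\{x \in \aalg : x\,r_s = \zeroa\}$, which, $\aalg$ being commutative, coincides with the corresponding right annihilator, so that the equivalence $r_s(\sigma_s(a)-a)=\zeroa \iff \sigma_s(a)-a \in \ann(r_s)$ is unambiguous.
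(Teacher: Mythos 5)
Your proposal is correct and matches the paper's intended derivation: the corollary is stated as a direct specialization of Proposition \ref{thecenter} (equivalently of the preceding corollary, since $\alpha\equiv\unita$ is symmetric), and your simplifications — the re-indexing $g=s^{-1}t$ with $ts^{-1}=s^{-1}t$ giving $r_g\in\aalg^G$, and commutativity turning $r_s\,\sigma_s(a)=a\,r_s$ into $\sigma_s(a)-a\in\ann(r_s)$ — are exactly the routine bookkeeping the paper leaves to the reader.
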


\begin{rem}
Note that in the proof of Theorem \ref{thecenter}, the property that the image of $\alpha$ is contained in $U(\aalg)$ is not used and therefore the theorem is true in greater generality. Consider the case when $\aalg$ is an integral domain and let $\alpha$ take its values in $\aalg \setminus \{\zeroa\}$. In this case it is clear that $r_s \, \sigma_s(a) = a \, r_s$ for all $a\in \aalg$ $\Longleftrightarrow$ $r_s ( \sigma_s(a) - a) = 0$ for all $a\in \aalg$ $\Longleftrightarrow r_s = 0$ for $s \not \in \sigma^{-1}(\ida) = \{g \in G \mid \sigma_g = \ida \}$. After a change of variable via $x=s^{-1}t$ the first condition in the description of the center may be written as $\sigma_s(r_x) \, \alpha(s,x) = r_{sxs^{-1}} \, \alpha(sxs^{-1},s)$ for all $(s,x) \in G \times G$. From this relation we conclude that $r_x=0$ if and only if $r_{sxs^{-1}}=0$, and hence it is trivially satisfied if we put $r_x = 0$ whenever $x \not \in \sigma^{-1}(\ida)$. This case has been presented in \cite[Proposition 2.2]{NeOyYu} with a more elaborate proof.
\end{rem}

The final corollary describes the exceptional situation when $Z(\gcrossedring)$ coincides with $\gcrossedring$, that is when $\gcrossedring$ is commutative.

\begin{cor}
$\gcrossedring$ is commutative if and only if all of the following hold
\begin{itemize}
\item[(i)] $\aalg$ is commutative,
\item[(ii)] $\sigma_s = \ida$ for each $s\in G$,
\item[(iii)] $G$ is abelian,
\item[(iv)] $\alpha$ is symmetric.
\end{itemize}
\end{cor}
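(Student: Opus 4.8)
The plan is to read this corollary off Proposition~\ref{thecenter}, using the trivial observation that $\gcrossedring$ is commutative exactly when $Z(\gcrossedring)=\gcrossedring$, i.e.\ when \emph{every} element $\sum_{g\in G} r_g\,\overline g$ satisfies the two families of relations appearing in that proposition. It will suffice to test these relations on the ``monomials'' $a\,\overline e$ (for $a\in\aalg$) and $\unita\,\overline s$ (for $s\in G$): each of the conditions (i)--(iv) drops out of one of these special cases, and conversely these conditions are enough to verify the center relations for an arbitrary element.

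For the ``only if'' direction, assume $\gcrossedring$ is commutative. Then $\tilde\aalg$ is a commutative subring, hence $\aalg$ is commutative by Remark~\ref{AcommAtildecomm}, giving (i). Next, fix $s\in G$; since $\unita\,\overline s$ is now central, applying the second relation of Proposition~\ref{thecenter} to the element with $r_s=\unita$ and $r_g=\zeroa$ otherwise yields $\unita\,\sigma_s(a)=a\,\unita$, i.e.\ $\sigma_s=\ida$, which is (ii). For (iii), fix $x\in G$ and apply the first relation of Proposition~\ref{thecenter} to the central element $\unita\,\overline x$ (so $r_x=\unita$, $r_g=\zeroa$ otherwise): for each $s\in G$, choosing $t=xs$ makes $r_{ts^{-1}}=r_x=\unita$, so the left-hand side equals $\alpha(x,s)\in U(\aalg)$, which is nonzero, while the right-hand side is $\sigma_s(r_{s^{-1}xs})\,\alpha(s,s^{-1}xs)$, which vanishes unless $s^{-1}xs=x$. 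Hence $xs=sx$ for all $s,x\in G$, i.e.\ (iii). With (iii) established, the same computation now gives $\alpha(x,s)=\sigma_s(\unita)\,\alpha(s,x)=\alpha(s,x)$ for all $x,s$, which is (iv).

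For the ``if'' direction, assume (i)--(iv) and check that an arbitrary $\sum_{g\in G}r_g\,\overline g$ satisfies the center conditions. The relation $r_s\,\sigma_s(a)=a\,r_s$ becomes $r_s\,a=a\,r_s$ by (ii) and holds by (i). In the first relation, (iii) gives $ts^{-1}=s^{-1}t$ so $r_{ts^{-1}}=r_{s^{-1}t}$, (ii) gives $\sigma_s(r_{s^{-1}t})=r_{s^{-1}t}$, and (iii) together with (iv) gives $\alpha(ts^{-1},s)=\alpha(s,ts^{-1})=\alpha(s,s^{-1}t)$; hence the two sides agree. Thus $Z(\gcrossedring)=\gcrossedring$. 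The whole argument is essentially bookkeeping once Proposition~\ref{thecenter} is in hand; the only genuinely delicate point is the extraction of (iii) (and then (iv)), where one must use that $\alpha$ is $U(\aalg)$-valued—so that $\alpha(x,s)\neq\zeroa$—in order to conclude that a nonzero term on one side of the center relation cannot be matched by a term that would vanish unless $s^{-1}xs=x$. (One could equally work directly from the commutation criterion \eqref{twoelementscommute} evaluated on the same monomials, but routing through the center is cleaner.)
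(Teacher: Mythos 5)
Your proposal is correct and follows essentially the paper's route: both arguments test centrality on the monomials $a\,\overline{e}$ and $\unita\,\overline{s}$ and read (i)--(iv) off Proposition~\ref{thecenter}, using that $\alpha$ takes values in $U(\aalg)$ so that a unit cannot equal a vanishing term. The only cosmetic difference is that the paper obtains (iii) and (iv) by directly equating $(\unita\overline{x})(\unita\overline{y})=\alpha(x,y)\overline{xy}$ with $(\unita\overline{y})(\unita\overline{x})=\alpha(y,x)\overline{yx}$, whereas you extract the same identity from the first center relation with the substitution $t=xs$; and you spell out the converse that the paper leaves as ``easily verified.''
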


\begin{proof}
Suppose that $Z(\gcrossedring)=\gcrossedring$. Then, $\tilde{\aalg} \subseteq \gcrossedring = Z(\gcrossedring)$ and hence (i) follows by Remark \ref{AcommAtildecomm}. By the assumption, $\unita \, \overline{s} \in Z(\gcrossedring)$ for any $s\in G$ and by Proposition \ref{thecenter} we see that $\sigma_s = \ida$ for every $s\in G$, and hence (ii). For any $(x,y) \in G \times G$ we have
$\alpha(x,y) \, \overline{xy} = (\unita \overline{x})(\unita \overline{y}) = (\unita \overline{y})(\unita \overline{x}) = \alpha(y,x) \, \overline{yx}$, but $\alpha(x,y) \neq \zeroa$ which implies $xy=yx$ and also $\alpha(x,y)=\alpha(y,x)$, which shows (iii) and (iv). The converse statement of the corollary is easily verified.
\end{proof}

\section{The commutant of $\tilde{\aalg}$ in $\gcrossedring$}

From now on we shall assume that $G \neq \{e\}$. As we have seen, $\tilde{\aalg}$ is a subring of $\gcrossedring$ and we define its commutant by
\begin{displaymath}
\comm(\tilde{\aalg}) = \{b \in \gcrossedring \, \mid \, ab = ba, \quad \forall a \in \tilde{\aalg} \}.
\end{displaymath}

\noindent Theorem \ref{commutantcomm} tells us exactly when an element of $\gcrossedring$ lies in $\comm(\tilde{\aalg})$.

\begin{thm}\label{commutantcomm}
The commutant of $\tilde{\aalg}$ in $\gcrossedring$ is as follows
    \begin{eqnarray*}
    \comm(\tilde{\aalg}) = \left\{ \sum_{s \in G} r_s \overline{s} \in \gcrossedring \,\, \Big\lvert \,\, r_s \, \sigma_s(a) = a \, r_s,    \quad \forall a \in \aalg , \, s\in G \right\}.
    \end{eqnarray*}
\end{thm}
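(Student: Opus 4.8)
The plan is to characterize membership in $\comm(\tilde{\aalg})$ by testing an arbitrary element $b = \sum_{s\in G} r_s \overline{s}$ against the generators $a\overline{e}$ of $\tilde{\aalg}$, just as in the proof of Proposition \ref{thecenter}, but now only requiring commutation with the copy of $\aalg$ rather than with all of $\gcrossedring$. First I would take $b = \sum_{s\in G} r_s \overline{s} \in \comm(\tilde{\aalg})$ and, for a fixed $a\in\aalg$, compute both products $(a\overline{e})\,b$ and $b\,(a\overline{e})$ using the multiplication rule \eqref{leftmodulemultiplication}. Since $\overline{e}$ contributes $\alpha(e,s)=\alpha(s,e)=\unita$ by condition (iii) and $\sigma_e=\ida$ by Remark \ref{zeroandone}, one gets $(a\overline{e})b = \sum_s a\, r_s\,\overline{s}$ and $b(a\overline{e}) = \sum_s r_s\,\sigma_s(a)\,\overline{s}$. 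Comparing coefficients of $\overline{s}$ (legitimate because $\gcrossedring$ is a free left $\aalg$-module on $\overline{G}$) yields $a\, r_s = r_s\,\sigma_s(a)$ for every $s\in G$ and every $a\in\aalg$, which is exactly the asserted condition.

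For the converse, I would start from an element $b=\sum_{s\in G} r_s\overline{s}$ satisfying $r_s\,\sigma_s(a)=a\,r_s$ for all $a\in\aalg$, $s\in G$, and verify directly that it commutes with an arbitrary element $a\overline{e}$ of $\tilde{\aalg}$ — the same computation as above, read in reverse. This shows $b\in\comm(\tilde{\aalg})$, completing the equality of the two sets.

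The only subtlety — hardly an obstacle — is making sure one uses precisely that $\tilde{\aalg}$ consists of elements supported at $e$, so that the cocycle factors $\alpha(e,s)$ and $\alpha(s,e)$ collapse to $\unita$ and the twisting plays no role; this is what makes the commutant condition depend only on $\sigma$ and not on $\alpha$. I expect the write-up to be short: one line of computation for each inclusion, with a remark that comparison of coefficients is valid since $\{\overline{s}\}_{s\in G}$ is a left $\aalg$-basis of $\gcrossedring$. It is also worth noting, as the paper presumably will, that $\comm(\tilde{\aalg})$ visibly contains $Z(\gcrossedring)$ (drop the first of the two center conditions), and that it is a subring of $\gcrossedring$ containing $\tilde{\aalg}^{G}$ in an appropriate sense — but those are observations for after the theorem, not part of its proof.
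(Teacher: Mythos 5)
Your proposal is correct and follows essentially the same route as the paper: test commutation against the elements $a\overline{e}$, use $\alpha(s,e)=\alpha(e,s)=\unita$ and $\sigma_e=\ida$ to simplify both products, and compare coefficients of $\overline{s}$ using the free left $\aalg$-module structure. The paper phrases this as a single chain of equivalences, which also handles the converse inclusion in one pass, exactly as you describe.
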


\begin{proof}
The proof is established through the following sequence of equivalences:
    \begin{eqnarray*}
        \sum_{s \in G} r_s \overline{s} \in \comm(\tilde{\aalg}) \Longleftrightarrow \left( \sum_{s \in G} r_s \overline{s} \right) \left( a \overline{e} \right) = \left( a \overline{e} \right) \left( \sum_{s \in G} r_s \overline{s} \right), \quad \forall a \in \aalg \\
        \Longleftrightarrow \sum_{s \in G} r_s \, \sigma_s(a) \, \alpha(s,e) \, \overline{se} = \sum_{s \in G} a \, \sigma_e(r_s) \, \alpha(e,s) \, \overline{es}, \quad \forall a \in \aalg \\
        \Longleftrightarrow \sum_{s \in G} r_s \, \sigma_s(a) \, \overline{s} = \sum_{s \in G} a \, r_s \, \overline{s}, \quad \forall a \in \aalg \\
\Longleftrightarrow \text{For each } s \in G: \, \, r_s \, \sigma_s(a) = a \, r_s, \quad \forall a \in \aalg
    \end{eqnarray*}
Here we have used the fact that $\alpha(s,e)=\alpha(e,s)=\unita$ for all $s\in G$. The above equivalence can also be deduced directly from \eqref{twoelementscommute}.
\end{proof}

In the case when $\aalg$ is commutative we get the following description of the commutant as a special case of Theorem \ref{commutantcomm}.

\begin{cor}\label{commutantcomm2}
If $\aalg$ is commutative, then the commutant of $\tilde{\aalg}$ in $\gcrossedring$ is
    \begin{eqnarray*}
    \comm(\tilde{\aalg}) = \left\{ \sum_{s \in G} r_s \overline{s} \in \gcrossedring \,\, \Big\lvert \,\, \sigma_s(a)-a \in \ann(r_s),  \quad \forall a \in \aalg , \, s\in G \right\}.
    \end{eqnarray*}
\end{cor}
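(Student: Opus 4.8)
The plan is to derive this corollary directly from Theorem \ref{commutantcomm} by translating the defining condition $r_s\,\sigma_s(a) = a\,r_s$ into the language of annihilators under the extra hypothesis that $\aalg$ is commutative. Since $\aalg$ is commutative, $a\,r_s = r_s\,a$, so the equation $r_s\,\sigma_s(a) = a\,r_s$ becomes $r_s\,\sigma_s(a) - r_s\,a = \zeroa$, i.e.\ $r_s\bigl(\sigma_s(a) - a\bigr) = \zeroa$, which is exactly the statement $\sigma_s(a) - a \in \ann(r_s)$. This is the entire content of the argument.

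More precisely, I would first recall that by Theorem \ref{commutantcomm} an element $\sum_{s\in G} r_s\overline{s} \in \gcrossedring$ lies in $\comm(\tilde{\aalg})$ if and only if $r_s\,\sigma_s(a) = a\,r_s$ holds for every $a\in\aalg$ and every $s\in G$. Then I would fix $s\in G$ and $a\in\aalg$ and rewrite this equality: using commutativity of $\aalg$ to replace $a\,r_s$ by $r_s\,a$, subtract to obtain $r_s\bigl(\sigma_s(a)-a\bigr) = \zeroa$, and observe that this says precisely $\sigma_s(a)-a \in \ann(r_s)$, where $\ann(r_s) = \{b\in\aalg \mid r_s\,b = \zeroa\}$ (note that in a commutative ring the left and right annihilators coincide, so there is no ambiguity). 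Since each step is an equivalence and $a,s$ were arbitrary, the two descriptions of $\comm(\tilde{\aalg})$ coincide.

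There is essentially no obstacle here: the corollary is a one-line reformulation, and the only thing to be slightly careful about is that $\ann(r_s)$ is being understood as the (two-sided, equivalently left or right) annihilator of the single element $r_s$ inside the commutative ring $\aalg$, and that $\sigma_s(a)\in\aalg$ since $\sigma_s\in\aut(\aalg)$, so the membership statement makes sense. One could also note in passing that the same conclusion follows directly from the commutation condition \eqref{twoelementscommute} restricted to the pair $\sum_s r_s\overline{s}$ and $a\overline{e}$, without invoking Theorem \ref{commutantcomm} explicitly, but routing the argument through the theorem is the cleanest presentation.
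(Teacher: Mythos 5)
Your proposal is correct and follows exactly the paper's route: the corollary is stated there as an immediate special case of Theorem \ref{commutantcomm}, obtained by using commutativity of $\aalg$ to rewrite $r_s\,\sigma_s(a)=a\,r_s$ as $r_s\bigl(\sigma_s(a)-a\bigr)=\zeroa$, i.e.\ $\sigma_s(a)-a\in\ann(r_s)$. Nothing further is needed.
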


When $\aalg$ is commutative it is clear that $\tilde{\aalg} \subseteq \comm(\tilde{\aalg})$. Using the explicit description of $\comm(\tilde{\aalg})$ in Corollary \ref{commutantcomm2}, we are now able to state exactly when $\tilde{\aalg}$ is maximal commutative, i.e. $\comm(\tilde{\aalg})=\tilde{\aalg}$.

\begin{cor}\label{maxabelianequivstatement}
Let $\aalg$ be commutative. Then $\tilde{\aalg}$ is maximal commutative in $\gcrossedring$ if and only if, for each pair $(s,r_s) \in (G \setminus\{e\}) \times (\aalg \setminus \{\zeroa\})$, there exists $a\in \aalg$ such that $\sigma_s(a)-a \not \in \ann(r_s) = \{ c \in \aalg \, \mid \, r_s \cdot c = \zeroa \}$.
\end{cor}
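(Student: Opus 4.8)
The plan is to derive this directly from the explicit description of $\comm(\tilde{\aalg})$ in Corollary \ref{commutantcomm2}, together with the observation (valid since $\aalg$ is commutative) that $\tilde{\aalg} \subseteq \comm(\tilde{\aalg})$, so that $\tilde{\aalg}$ is maximal commutative precisely when the reverse inclusion $\comm(\tilde{\aalg}) \subseteq \tilde{\aalg}$ holds. The idea is to unwind what it means for an arbitrary element $\sum_{s\in G} r_s \overline{s} \in \comm(\tilde{\aalg})$ to actually lie in $\tilde{\aalg}$, namely that $r_s = \zeroa$ for all $s \neq e$, and to compare this with the membership condition $\sigma_s(a) - a \in \ann(r_s)$ for all $a\in \aalg$, $s\in G$ supplied by Corollary \ref{commutantcomm2}.

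First I would record that the condition ``$\sigma_e(a) - a \in \ann(r_e)$ for all $a$'' is vacuous, since $\sigma_e = \ida$ by Remark \ref{zeroandone}; hence the coefficient $r_e$ at the identity is completely unconstrained, which is consistent with $\tilde{\aalg}$ always sitting inside the commutant. So the only obstruction to $\comm(\tilde{\aalg}) = \tilde{\aalg}$ can come from nonzero coefficients $r_s$ with $s \neq e$. Next, I would argue the two directions. For the contrapositive of ``$\Leftarrow$'': if there is a pair $(s, r_s) \in (G\setminus\{e\}) \times (\aalg\setminus\{\zeroa\})$ with $\sigma_s(a) - a \in \ann(r_s)$ for every $a \in \aalg$, then the element $r_s \overline{s}$ lies in $\comm(\tilde{\aalg})$ by Corollary \ref{commutantcomm2} but is not in $\tilde{\aalg}$ because $r_s \neq \zeroa$ and $s \neq e$; hence $\tilde{\aalg}$ is not maximal commutative. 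For ``$\Rightarrow$'' (again by contraposition): if $\tilde{\aalg}$ is not maximal commutative, pick $b = \sum_{s\in G} r_s \overline{s} \in \comm(\tilde{\aalg}) \setminus \tilde{\aalg}$; then some coefficient $r_{s_0}$ with $s_0 \neq e$ is nonzero, and since $b$ satisfies the defining condition of Corollary \ref{commutantcomm2} componentwise, the pair $(s_0, r_{s_0})$ violates the stated criterion. Putting the two contrapositives together yields the equivalence.

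I do not expect any genuine obstacle here; the statement is essentially a reformulation of Corollary \ref{commutantcomm2}. The only point requiring a little care is the passage between ``for every $a$ there exists... such that $\sigma_s(a) - a \notin \ann(r_s)$'' and its negation, i.e.\ making sure the quantifier over $a$ is handled correctly: the failure of the criterion for a pair $(s, r_s)$ means $\sigma_s(a) - a \in \ann(r_s)$ for \emph{all} $a$, which is exactly the per-component membership condition of Corollary \ref{commutantcomm2}, so the bookkeeping matches up cleanly. One should also note explicitly that $\ann(r_s) = \{ c \in \aalg \mid r_s \cdot c = \zeroa\}$ is the annihilator appearing in that corollary, so that $r_s \overline{s} \in \comm(\tilde{\aalg})$ is equivalent to $\sigma_s(a) - a \in \ann(r_s)$ for all $a$ — this is what makes the single-term witness $r_{s_0}\overline{s_0}$ legitimate in the first direction, and what lets us read off the bad pair from an arbitrary witness in the second.
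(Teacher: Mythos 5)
Your proof is correct and follows exactly the route the paper intends (the paper states this corollary without a written proof, as an immediate unwinding of Corollary~\ref{commutantcomm2} together with $\tilde{\aalg}\subseteq\comm(\tilde{\aalg})$), including the key points that $r_e$ is unconstrained since $\sigma_e=\ida$ and that a single-term witness $r_s\overline{s}$ suffices. The only blemish is cosmetic: the two contrapositives you prove are attached to the wrong direction labels (what you call the contrapositive of ``$\Leftarrow$'' is in fact the contrapositive of ``$\Rightarrow$'' and vice versa), but both implications are established, so the equivalence stands.
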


\begin{exmp}\label{introtillchristians}
In this example we follow the notation of \cite{svesildej}. Let $\sigma : X \to X$ be a bijection
on a non-empty set $X$, and $A \subseteq \complex^X$ an algebra of functions, such that
if $h \in A$ then $h \circ \sigma \in A$ and $h \circ \sigma^{-1} \in A$. Let $\tilde{\sigma} : \integers \to \aut(A)$ be defined by $\tilde{\sigma}_n : f \mapsto f \circ \sigma^{\circ (-n)}$ for $f\in A$. We now have a
$\integers$-crossed system (with trivial $\tilde{\sigma}$-cocycle) and we may form the crossed product
$A \rtimes_{\tilde{\sigma}} \integers$. Recall the definition of the set $\sep_A^n(X) = \{x \in X \, \mid \, \exists h \in A, \text{ s.t. }
 h(x) \neq (\tilde{\sigma}_n(h))(x) \}$. Corollary \ref{maxabelianequivstatement} is a generalization of \cite[Theorem 3.5]{svesildej} and the easiest way to see this is by negating the statements. Suppose that $A$ is not maximal commutative in $A \rtimes_{\tilde{\sigma}} \integers$. Then, by Corollary \ref{maxabelianequivstatement}, there exists a pair $(n,f_n) \in (\integers \setminus \{0\}) \times (A \setminus \{0\})$ such that $\tilde{\sigma}_n(g)-g \in \ann(f_n)$ for every $g \in A$, i.e. $\supp(\tilde{\sigma}_n(g)-g) \cap \supp(f_n) = \emptyset$ for every $g \in A$. In particular, this means that $f_n$ is identically zero on $\sep_A^n(X)$. However, $f_n \in A \setminus \{0\}$ is not identically zero on $X$ and hence $\sep_A^n(X)$ is not a \emph{domain of uniqueness} (as defined in \cite[Definition 3.2]{svesildej}). The converse can be proved similarly.
\end{exmp}

\begin{cor}\label{commutantzerodiv}
Let $\aalg$ be commutative. If for each $s \in G \setminus \{e\}$ it is always possible to find some $a \in \aalg$ such that $\sigma_s(a)-a$ is not a zero-divisor in $\aalg$, then $\tilde{\aalg}$ is maximal commutative in $\gcrossedring$.
\end{cor}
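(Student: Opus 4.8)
The plan is to derive this as a direct consequence of Corollary \ref{maxabelianequivstatement}. That corollary says $\tilde{\aalg}$ is maximal commutative precisely when, for every $s \in G \setminus \{e\}$ and every nonzero $r_s \in \aalg$, one can produce an $a \in \aalg$ with $\sigma_s(a) - a \notin \ann(r_s)$. So the task reduces to showing that the hypothesis --- for each $s \neq e$ there exists $a$ with $\sigma_s(a) - a$ not a zero-divisor --- implies this condition uniformly in $r_s$.

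First I would fix $s \in G \setminus \{e\}$ and, using the hypothesis, choose $a \in \aalg$ such that $b := \sigma_s(a) - a$ is not a zero-divisor in $\aalg$. Note in particular $b \neq \zeroa$, since $\zeroa$ is a zero-divisor (here one uses that $\aalg \neq \{\zeroa\}$, which holds because $\aalg$ is unital; it is worth a remark that if $\aalg$ were the zero ring the statement is vacuous anyway). Then I would take an arbitrary nonzero $r_s \in \aalg \setminus \{\zeroa\}$ and observe that $r_s \cdot b \neq \zeroa$: this is exactly the defining property of $b$ not being a zero-divisor, applied to the nonzero element $r_s$. Hence $b = \sigma_s(a) - a \notin \ann(r_s)$, and the \emph{same} $a$ works for every choice of $r_s$. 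This verifies the criterion in Corollary \ref{maxabelianequivstatement} and so $\tilde{\aalg}$ is maximal commutative in $\gcrossedring$.

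There is essentially no obstacle here; the only point requiring a moment's care is the logical quantifier structure --- the hypothesis gives an $a$ depending on $s$ but not on $r_s$, which is actually stronger than what the criterion demands (where $a$ is allowed to depend on both), so the implication goes through cleanly. I would also mention, by way of contextualizing the result, that in the setting of Example \ref{introtillchristians} with $\aalg = A$ an algebra of functions, $\sigma_s(a) - a$ failing to be a zero-divisor corresponds to this function having empty-interior (in suitable cases, dense-complement) zero set, tying the condition back to the classical topological freeness/separation picture; but this remark is optional and not needed for the proof.

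\begin{proof}
Let $s \in G \setminus \{e\}$. By hypothesis there exists $a \in \aalg$ such that $\sigma_s(a) - a$ is not a zero-divisor in $\aalg$. In particular $\sigma_s(a) - a \neq \zeroa$. Now let $r_s \in \aalg \setminus \{\zeroa\}$ be arbitrary. Since $\sigma_s(a)-a$ is not a zero-divisor and $r_s \neq \zeroa$, we have $r_s \cdot (\sigma_s(a) - a) \neq \zeroa$, that is, $\sigma_s(a) - a \notin \ann(r_s)$. Thus for each pair $(s,r_s) \in (G \setminus \{e\}) \times (\aalg \setminus \{\zeroa\})$ there exists $a \in \aalg$ with $\sigma_s(a) - a \notin \ann(r_s)$, and by Corollary \ref{maxabelianequivstatement} it follows that $\tilde{\aalg}$ is maximal commutative in $\gcrossedring$.
\end{proof}
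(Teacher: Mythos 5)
Your proof is correct and follows exactly the route the paper intends: the corollary is stated as an immediate consequence of Corollary \ref{maxabelianequivstatement}, and your observation that a non-zero-divisor $\sigma_s(a)-a$ avoids $\ann(r_s)$ for every nonzero $r_s$ is precisely the intended verification of that criterion.
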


The next corollary is a consequence of Corollary \ref{maxabelianequivstatement} and shows how maximal commutativity of the base coefficient ring in the crossed product has an impact on the non-triviality of the action $\sigma$.

\begin{cor}\label{maxabeliansigma}
Let the subring $\tilde{\aalg}$ be maximal commutative in $\gcrossedring$, then $\sigma_g \neq \ida$ for all $g\in G \setminus \{e\}$.
\end{cor}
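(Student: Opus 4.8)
The plan is to argue by contraposition, exhibiting an explicit element of $\comm(\tilde{\aalg})$ lying outside $\tilde{\aalg}$ as soon as some $\sigma_g$ with $g \neq e$ equals the identity. First observe that if $\tilde{\aalg}$ is maximal commutative then in particular $\tilde{\aalg}$ is commutative, hence $\aalg$ is commutative by Remark \ref{AcommAtildecomm}; this puts us in the setting where Corollary \ref{maxabelianequivstatement} is available.

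Now suppose, towards a contradiction, that $\sigma_g = \ida$ for some $g \in G \setminus \{e\}$. Apply Corollary \ref{maxabelianequivstatement} to the pair $(g, \unita) \in (G \setminus \{e\}) \times (\aalg \setminus \{\zeroa\})$: maximal commutativity of $\tilde{\aalg}$ forces the existence of some $a \in \aalg$ with $\sigma_g(a) - a \notin \ann(\unita)$. But $\ann(\unita) = \{\zeroa\}$, whereas $\sigma_g(a) - a = a - a = \zeroa$ for every $a \in \aalg$ because $\sigma_g = \ida$. This contradiction shows $\sigma_g \neq \ida$ for every $g \in G \setminus \{e\}$.

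Equivalently, one may bypass Corollary \ref{maxabelianequivstatement} and read the conclusion straight off Theorem \ref{commutantcomm}: if $\sigma_g = \ida$ with $g \neq e$, the element $\unita \overline{g}$ has coefficient $\unita$ in degree $g$ satisfying $\unita \, \sigma_g(a) = a = a \, \unita$ for all $a \in \aalg$, and zero coefficients elsewhere, so $\unita \overline{g} \in \comm(\tilde{\aalg})$; since $g \neq e$, however, $\unita \overline{g} \notin \tilde{\aalg}$, contradicting $\comm(\tilde{\aalg}) = \tilde{\aalg}$.

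The only delicate point is the tacit assumption that $\aalg$ is a nontrivial ring, i.e. $\unita \neq \zeroa$, which is what guarantees $\unita \in \aalg \setminus \{\zeroa\}$ and $\unita \overline{g} \notin \tilde{\aalg}$; this is harmless, as otherwise $\gcrossedring$ is the zero ring and the statement is vacuous. Beyond this there is no real obstacle: the entire content has already been packaged in Theorem \ref{commutantcomm} and Corollary \ref{maxabelianequivstatement}, and this corollary is a one-line consequence.
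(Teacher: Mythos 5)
Your proposal is correct and follows essentially the same route as the paper, which states this corollary as an immediate consequence of Corollary \ref{maxabelianequivstatement}: applying that corollary to the pair $(g,\unita)$ (or, equivalently, observing via Theorem \ref{commutantcomm} that $\unita\overline{g}\in\comm(\tilde{\aalg})\setminus\tilde{\aalg}$ when $\sigma_g=\ida$, $g\neq e$) is exactly the intended one-line argument. Your caveat about $\unita\neq\zeroa$ is the standard tacit assumption here and does not affect the argument.
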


The description of the commutant $\comm(\tilde{\aalg})$ from Corollary \ref{commutantcomm2} can be further refined in the case when $\aalg$ is an integral domain.

\begin{cor}\label{commutantcomm3}
If $\aalg$ is an integral domain\footnote{By an \emph{integral domain} we shall mean a commutative ring with an additive identity $\zeroa$ and a multiplicative identity $\unita$ such that $\zeroa \neq \unita$, in which the product of any two non-zero elements is always non-zero.}, then the commutant of $\tilde{\aalg}$ in $\gcrossedring$ is
    \begin{eqnarray*}
    \comm(\tilde{\aalg}) = \Big\{ \sum_{s\in \sigma^{-1}(\ida) } r_s \overline{s} \in \gcrossedring \,\, \Big\lvert \,\, r_s \in \aalg \Big\}
    \end{eqnarray*}
    where $\sigma^{-1}(\ida) = \{g \in G \mid \sigma_g = \ida \}$.
\end{cor}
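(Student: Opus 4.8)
The plan is to deduce Corollary \ref{commutantcomm3} directly from Corollary \ref{commutantcomm2} by exploiting the absence of non-trivial zero-divisors in an integral domain. Recall that Corollary \ref{commutantcomm2} describes $\comm(\tilde{\aalg})$ as the set of sums $\sum_{s\in G} r_s \overline{s}$ such that $\sigma_s(a)-a \in \ann(r_s)$ for all $a\in\aalg$ and all $s\in G$. The key observation is that in an integral domain the annihilator of any element is either all of $\aalg$ (when the element is $\zeroa$) or just $\{\zeroa\}$ (when the element is non-zero), since $r_s\cdot c = \zeroa$ with $r_s\neq\zeroa$ forces $c=\zeroa$.

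First I would fix $s\in G$ and analyze the two possibilities for $r_s$. If $r_s=\zeroa$, then $\ann(r_s)=\aalg$ and the condition $\sigma_s(a)-a\in\ann(r_s)$ holds vacuously for every $a\in\aalg$, so such a term is always allowed and contributes nothing. If $r_s\neq\zeroa$, then $\ann(r_s)=\{\zeroa\}$, so the condition becomes $\sigma_s(a)-a=\zeroa$, i.e. $\sigma_s(a)=a$, for every $a\in\aalg$; this says precisely that $\sigma_s=\ida$, i.e. $s\in\sigma^{-1}(\ida)$. Conversely, if $s\in\sigma^{-1}(\ida)$ then $\sigma_s(a)-a=\zeroa\in\ann(r_s)$ trivially for any choice of $r_s\in\aalg$. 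Combining these, an element $\sum_{s\in G} r_s\overline{s}$ lies in $\comm(\tilde{\aalg})$ if and only if $r_s=\zeroa$ whenever $s\notin\sigma^{-1}(\ida)$, which is exactly the asserted description $\comm(\tilde{\aalg})=\big\{\sum_{s\in\sigma^{-1}(\ida)} r_s\overline{s} \mid r_s\in\aalg\big\}$.

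There is no real obstacle here: the argument is a short case split on whether each coefficient $r_s$ vanishes, using only the defining property of an integral domain together with the already-established Corollary \ref{commutantcomm2}. The only point deserving a line of care is the double implication in the case $r_s\neq\zeroa$ — namely that ``$\sigma_s(a)=a$ for all $a\in\aalg$'' is literally the definition of $\sigma_s=\ida$ — and the remark that the empty-support convention makes the ``$\Leftarrow$'' direction automatic. One might also note, though it is not needed for the proof, that $\sigma^{-1}(\ida)$ is a subgroup of $G$ (it is the kernel of $\sigma$ when $\sigma$ is a homomorphism, which by Remark \ref{conditionsAcomm} it is here since $\aalg$ is commutative), so the displayed set is in fact a crossed product of $\aalg$ by that subgroup; but for the statement as given, the case analysis above suffices.
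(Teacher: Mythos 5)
Your argument is correct and is exactly the intended deduction: the paper states this as an immediate refinement of Corollary \ref{commutantcomm2}, the point being precisely that in an integral domain $\ann(r_s)$ is $\aalg$ for $r_s=\zeroa$ and $\{\zeroa\}$ otherwise, so the condition $\sigma_s(a)-a\in\ann(r_s)$ for all $a$ forces $r_s=\zeroa$ unless $\sigma_s=\ida$ (the same observation appears in the paper's remark following Proposition \ref{thecenter}). Nothing further is needed.
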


The following corollary can be derived directly from Corollary \ref{maxabeliansigma} together with either Corollary \ref{commutantzerodiv} or Corollary \ref{commutantcomm3}.

\begin{cor}\label{maxabeliansigmaconverse}
Let $\aalg$ be an integral domain. Then $\sigma_g \neq \ida$ for all $g\in G \setminus \{e\}$ if and only if $\tilde{\aalg}$ is maximal commutative in $\gcrossedring$.
\end{cor}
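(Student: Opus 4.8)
The plan is to derive Corollary \ref{maxabeliansigmaconverse} purely as a formal consequence of the preceding results, with essentially no new computation. The statement is a biconditional, so I would prove the two implications separately, pointing in each direction to the appropriate earlier corollary.

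For the forward direction, assume $\sigma_g \neq \ida$ for all $g \in G \setminus \{e\}$. Since $\aalg$ is an integral domain, it is in particular commutative, so Corollary \ref{commutantzerodiv} applies: it suffices to show that for each $s \in G \setminus \{e\}$ there is some $a \in \aalg$ for which $\sigma_s(a) - a$ is not a zero-divisor. But in an integral domain the only zero-divisor is $\zeroa$ itself, so this amounts to finding $a$ with $\sigma_s(a) \neq a$ — and such an $a$ exists precisely because $\sigma_s \neq \ida$. Hence $\tilde{\aalg}$ is maximal commutative. (Alternatively, one can invoke Corollary \ref{commutantcomm3}: the hypothesis $\sigma_g \neq \ida$ for $g \neq e$ means $\sigma^{-1}(\ida) = \{e\}$, so the displayed description of $\comm(\tilde{\aalg})$ collapses to $\{r_e \overline{e} \mid r_e \in \aalg\} = \tilde{\aalg}$.) I would present whichever of these two is cleaner, probably the second, and remark that the other works too, matching the phrasing ``either Corollary \ref{commutantzerodiv} or Corollary \ref{commutantcomm3}''.

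For the reverse direction, assume $\tilde{\aalg}$ is maximal commutative in $\gcrossedring$. This is exactly the hypothesis of Corollary \ref{maxabeliansigma}, whose conclusion is that $\sigma_g \neq \ida$ for all $g \in G \setminus \{e\}$. So this direction is immediate and requires nothing about $\aalg$ being an integral domain beyond what is already needed to apply that corollary (which needs only commutativity, itself part of the integral domain hypothesis).

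I do not anticipate any real obstacle here: the corollary is a packaging of two already-established one-directional statements, and the only thing to get right is checking that the integral domain hypothesis is genuinely what makes ``not a zero-divisor'' equivalent to ``nonzero'' in the forward direction, so that Corollary \ref{commutantzerodiv} (or the collapse of Corollary \ref{commutantcomm3}) is applicable. The write-up will be a single short paragraph citing the two corollaries in turn.

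\begin{proof}
Suppose first that $\sigma_g \neq \ida$ for all $g \in G \setminus \{e\}$. Then $\sigma^{-1}(\ida) = \{g \in G \mid \sigma_g = \ida\} = \{e\}$, and since $\aalg$ is an integral domain, Corollary \ref{commutantcomm3} yields
\begin{displaymath}
\comm(\tilde{\aalg}) = \Big\{ \sum_{s \in \{e\}} r_s \overline{s} \in \gcrossedring \,\, \Big\lvert \,\, r_s \in \aalg \Big\} = \{ r_e \overline{e} \mid r_e \in \aalg \} = \tilde{\aalg},
\end{displaymath}
so $\tilde{\aalg}$ is maximal commutative in $\gcrossedring$. (Alternatively, note that in an integral domain the only zero-divisor is $\zeroa$, so for each $s \in G \setminus \{e\}$ the element $\sigma_s(a) - a$ fails to be a zero-divisor as soon as $\sigma_s(a) \neq a$; such an $a$ exists because $\sigma_s \neq \ida$, and Corollary \ref{commutantzerodiv} applies.)

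Conversely, if $\tilde{\aalg}$ is maximal commutative in $\gcrossedring$, then Corollary \ref{maxabeliansigma} gives $\sigma_g \neq \ida$ for all $g \in G \setminus \{e\}$.
\end{proof}
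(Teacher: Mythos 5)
Your proposal is correct and follows exactly the route the paper takes: the paper derives this corollary ``directly from Corollary \ref{maxabeliansigma} together with either Corollary \ref{commutantzerodiv} or Corollary \ref{commutantcomm3}'', which is precisely your two-implication argument. Your write-up just fills in the (correct) details, including the key point that in an integral domain ``not a zero-divisor'' means ``nonzero'', respectively that $\sigma^{-1}(\ida)=\{e\}$ collapses the commutant to $\tilde{\aalg}$.
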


\begin{rem}\label{injectivityofsigma}
Recall that when $\aalg$ is commutative, $\sigma$ is a group homomorphism. Thus, to say that $\sigma_g \neq \ida$ for all $g\in G\setminus\{e\}$ is just another way of saying that $\ker(\sigma)=\{e\}$ or equivalently, that $\sigma$ is injective.
\end{rem}

\begin{exmp}
Let $\aalg =\complex[x_1,\ldots,x_n]$ be the polynomial ring in $n$ commuting variables $x_1,\ldots,x_n$ and $G = S_n$ the symmetric group of $n$ elements. An element $\tau \in S_n$ is a permutation which maps the sequence $(1,\ldots,n)$ into $(\tau(1),\ldots,\tau(n))$. The group $S_n$ acts on $\complex[x_1,\ldots,x_n]$ in a natural way.
To each $\tau \in S_n$ we may associate a map $\aalg \to \aalg$, which sends
any polynomial $f(x_1,...,x_n) \in \complex[x_1,\ldots,x_n]$ into a new polynomial $g$, defined by $g(x_1,\ldots,x_n) = f(x_{\tau(1)},\ldots,x_{\tau(n)})$. It is clear that each such mapping is a ring automorphism on $\aalg$. Let $\sigma$ be the embedding $S_n \hookrightarrow \aut(\aalg)$ and $\alpha \equiv \unita$. Note that $\complex[x_1,\ldots,x_n]$ is an integral domain and that $\sigma$ is injective. Hence, by Corollary \ref{maxabeliansigmaconverse} and Remark \ref{injectivityofsigma} it is clear that the embedding of $\complex[x_1,\ldots,x_n]$ is maximal commutative in $\complex[x_1,\ldots,x_n] \rtimes^{\sigma} S_n$.
\end{exmp}

One might want to describe properties of the $\sigma$-cocycle in the case when $\tilde{\aalg}$ is maximal commutative, but unfortunately this will lead to a dead end. The explaination for this is revealed by condition (iii) in the definition of a $G$-crossed system, where we see that $\alpha(e,g)=\alpha(g,e)=\unita$ for all $g \in G$ and hence we are not able to extract any interesting information about $\alpha$ by assuming that $\tilde{\aalg}$ is maximal commutative. At the end of \cite[Remark 1.4.3, part 2]{MoGR} it is explained that in a \emph{twisted group ring} $\aalg \rtimes_\alpha G$, i.e. with $\sigma \equiv \ida$, $\tilde{\aalg}$ can never be maximal commutative (for $G \neq \{e\}$). When $\aalg$ is commutative, this follows immediately from Corollary \ref{maxabeliansigma}.

We will now give a sufficient condition for $\comm(\tilde{\aalg})$ to be commutative.

\begin{prop}\label{commcomm}
Let $\aalg$ be a commutive ring, $G$ an abelian group and $\alpha$ symmetric. Then $\comm(\tilde{\aalg})$ is commutative.
\end{prop}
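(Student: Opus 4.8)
The plan is to take two arbitrary elements of $\comm(\tilde{\aalg})$, say $b = \sum_{s\in G} r_s \overline{s}$ and $c = \sum_{t\in G} q_t \overline{t}$, and show that $bc = cb$ by comparing coefficients of $\overline{g}$ for each $g \in G$, using the characterization from Theorem \ref{commutantcomm}: namely that $r_s\,\sigma_s(a) = a\, r_s$ for all $a\in\aalg$, and likewise for the $q_t$. Since $\aalg$ is commutative, the latter says precisely that $\sigma_s(a) - a \in \ann(r_s)$ for all $a$, i.e. $r_s(\sigma_s(a) - a) = \zeroa$, which rearranges to the multiplicative identity $r_s\,\sigma_s(a) = r_s\, a$ for every $a\in\aalg$. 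I will carry this ``absorption'' identity around as the main computational tool.

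First I would write out the $\overline{g}$-coefficient of $bc$: by \eqref{product} it is $\sum_{\{(s,t)\mid st = g\}} r_s\,\sigma_s(q_t)\,\alpha(s,t)$. Now apply the absorption identity with $a = q_t$ (legitimate since $q_t \in \aalg$): $r_s\,\sigma_s(q_t) = r_s\, q_t = q_t\, r_s$, where the last step uses commutativity of $\aalg$. Similarly I can rewrite $\alpha(s,t)$: since $\alpha(s,t)\in U(\aalg)\subseteq\aalg$ and $\aalg$ is commutative, all the factors $r_s, q_t, \alpha(s,t)$ commute with one another freely. So the $\overline{g}$-coefficient of $bc$ becomes $\sum_{\{(s,t)\mid st=g\}} q_t\, r_s\, \alpha(s,t)$. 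Doing the same for $cb$: its $\overline{g}$-coefficient is $\sum_{\{(t,s)\mid ts = g\}} q_t\,\sigma_t(r_s)\,\alpha(t,s)$, which by absorption (with $a = r_s$) and commutativity of $\aalg$ equals $\sum_{\{(t,s)\mid ts=g\}} q_t\, r_s\,\alpha(t,s)$.

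The remaining gap is purely combinatorial-plus-cocycle: I must match $\sum_{st=g} q_t r_s \alpha(s,t)$ against $\sum_{ts=g} q_t r_s \alpha(t,s)$. Here is where the two extra hypotheses enter. Since $G$ is abelian, the index sets $\{(s,t)\mid st=g\}$ and $\{(t,s)\mid ts=g\}$ are the same set of ordered pairs, so the sums run over identical $(s,t)$. Since $\alpha$ is symmetric, $\alpha(s,t) = \alpha(t,s)$, so the summands coincide term by term. Hence the $\overline{g}$-coefficients of $bc$ and $cb$ agree for every $g$, giving $bc = cb$. I expect the main obstacle — really the only place needing care — is making sure the absorption identity is invoked in the correct multiplicative form and that every commuting step genuinely uses commutativity of $\aalg$ (so that $r_s, q_t$, and the cocycle values can be freely permuted); once that bookkeeping is clean, abelianness of $G$ handles the index set and symmetry of $\alpha$ handles the twisting factor, and nothing deeper is required.

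\begin{proof}
Let $b = \sum_{s\in G} r_s \overline{s}$ and $c = \sum_{t\in G} q_t \overline{t}$ be two arbitrary elements of $\comm(\tilde{\aalg})$. By Theorem \ref{commutantcomm} we have $r_s\,\sigma_s(a) = a\, r_s$ and $q_t\,\sigma_t(a) = a\, q_t$ for all $a\in\aalg$ and all $s,t\in G$. Since $\aalg$ is commutative, the elements $r_s$, $q_t$ and $\alpha(s,t)\in U(\aalg)$ all commute with one another for every choice of indices. Fix $g\in G$. Using \eqref{product}, the coefficient of $\overline{g}$ in $bc$ is
\begin{displaymath}
\sum_{\{(s,t)\in G\times G \,\mid\, st=g\}} r_s\,\sigma_s(q_t)\,\alpha(s,t)
= \sum_{\{(s,t)\in G\times G \,\mid\, st=g\}} q_t\, r_s\,\alpha(s,t),
\end{displaymath}
where we applied $r_s\,\sigma_s(q_t) = q_t\, r_s$ (valid since $q_t\in\aalg$ and $\aalg$ is commutative). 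Likewise, the coefficient of $\overline{g}$ in $cb$ is
\begin{displaymath}
\sum_{\{(t,s)\in G\times G \,\mid\, ts=g\}} q_t\,\sigma_t(r_s)\,\alpha(t,s)
= \sum_{\{(t,s)\in G\times G \,\mid\, ts=g\}} q_t\, r_s\,\alpha(t,s).
\end{displaymath}
Since $G$ is abelian, $\{(s,t)\in G\times G \mid st=g\} = \{(s,t)\in G\times G \mid ts=g\}$, so both sums are indexed by the same set of ordered pairs $(s,t)$. Since $\alpha$ is symmetric, $\alpha(s,t) = \alpha(t,s)$ for each such pair. Hence the two coefficients coincide for every $g\in G$, and therefore $bc = cb$. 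As $b$ and $c$ were arbitrary, $\comm(\tilde{\aalg})$ is commutative.
\end{proof}
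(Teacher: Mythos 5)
Your proof is correct and follows essentially the same route as the paper: expand the product, use the commutant characterization to replace $r_s\,\sigma_s(q_t)$ by $q_t\,r_s$, and then invoke commutativity of $\aalg$, abelianness of $G$ and symmetry of $\alpha$ to match the two products. The only difference is cosmetic — you compare coefficients of each $\overline{g}$ separately, while the paper reindexes the full double sum at once.
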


\begin{proof}
Let $\sum_{s \in G} r_s \overline{s}$ and $\sum_{t \in G} p_t \overline{t}$ be arbitrary elements of $\comm(\tilde{\aalg})$, then by our assumptions and Corollary \ref{commutantcomm2} we get
\begin{eqnarray*}
\left(\sum_{s \in G} r_s \, \overline{s} \right) \left( \sum_{t \in G} p_t \, \overline{t} \right) &=& \sum_{(s,t) \in G \times G} r_s \, \sigma_s(p_t) \, \alpha(s,t) \, \overline{st}
= \sum_{(s,t) \in G \times G} r_s \, p_t \, \alpha(s,t) \, \overline{st} \\
&=& \sum_{(s,t) \in G \times G} p_t \, \sigma_t(r_s) \, \alpha(t,s) \, \overline{ts} = \left( \sum_{t \in G} p_t \, \overline{t} \right) \left(\sum_{s \in G} r_s \, \overline{s} \right).
\end{eqnarray*}
This shows that $\comm(\tilde{\aalg})$ is commutative.
\end{proof}

This proposition is a generalization of \cite[Proposition 2.1]{svesildej} from a function algebra to an arbitrary unital associative commutative ring $\aalg$, from $\integers$ to an arbitrary abelian group $G$ and from a trivial to a possibly non-trivial symmetric $\sigma$-cocycle $\alpha$.

\begin{rem}
By using Proposition \ref{commcomm} and the arguments made in Example \ref{introtillchristians} it is clear that Corollary \ref{commutantcomm2} is a generalization of \cite[Theorem 3.3]{svesildej}. Furthermore, we see that Corollary \ref{centerspecial} is a generalization of \cite[Theorem 3.6]{svesildej}.
\end{rem}

\section{Ideals in $\gcrossedring$}

In this section we describe properties of the ideals in $\gcrossedring$ in connection with maximal commutativity and properties of the action $\sigma$.


\begin{thm}\label{killerproof}
Let $\aalg$ be commutative. Then
\begin{displaymath}
I \cap \comm(\tilde{\aalg}) \neq \{0\}
\end{displaymath}
for every non-zero two-sided ideal $I$ in $\gcrossedring$.
\end{thm}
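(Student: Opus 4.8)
The plan is to take an arbitrary non-zero two-sided ideal $I$ and, starting from any non-zero element of $I$, manufacture a non-zero element that lies in the commutant $\comm(\tilde\aalg)$. The natural measure of complexity of an element $x = \sum_{g} a_g \overline g$ is the cardinality of its support $\supp(x) = \{g \in G \mid a_g \neq \zeroa\}$, so I would fix a non-zero $x \in I$ whose support has \emph{minimal} size among all non-zero elements of $I$, and then argue that such an $x$ is automatically in $\comm(\tilde\aalg)$.

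First I would record the basic manipulations available inside the ideal. Since $I$ is two-sided, for any $b \in \aalg$ the elements $(b\overline e)x$ and $x(b\overline e)$ both lie in $I$, and hence so does their difference $y_b := (b\overline e)x - x(b\overline e)$. Using the multiplication rule \eqref{leftmodulemultiplication} and the commutativity of $\aalg$, one computes coefficient-wise that the $g$-coefficient of $y_b$ is $b\,a_g - a_g\,\sigma_g(b) = a_g(b - \sigma_g(b))$ — in particular $\supp(y_b) \subseteq \supp(x)$, and $y_b$ is supported on $\{g \in \supp(x) \mid a_g(b-\sigma_g(b)) \neq \zeroa\}$, which is strictly smaller than $\supp(x)$ because it misses (at least) the index $e$. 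By minimality of $|\supp(x)|$, this forces $y_b = 0$ for every $b \in \aalg$; that is, $a_g\,\sigma_g(a) = a\,a_g$ for all $a \in \aalg$ and all $g$, which by Theorem \ref{commutantcomm} is exactly the statement $x \in \comm(\tilde\aalg)$. Since $x \neq 0$, this already shows $I \cap \comm(\tilde\aalg) \neq \{0\}$.

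There is one subtlety to handle with care, and I expect it to be the only real obstacle: the reduction above only shrinks the support when it can still ``see'' a coefficient, but one must make sure that translating/normalizing to bring $e$ into the support does not itself destroy non-zeroness. Concretely, I would first use that $\overline g \in U(\gcrossedring)$ for each $g$ (the defining property of a crossed product) to replace $x$ by $\overline{h}^{-1} x$ for a suitable $h \in \supp(x)$; this is again an element of $I$ with support of the same cardinality, and it can be arranged to contain $e$. Equivalently, and perhaps more cleanly, I would not normalize at all but instead phrase the key step as: for $g_0 \in \supp(x)$ fixed, multiply $x$ on the left by $\unita\,\overline{g_0}^{-1}$ so that the coefficient at $e$ is a unit times $a_{g_0} \neq \zeroa$, hence non-zero; then the computation of $y_b$ shows its $e$-coefficient vanishes while all other coefficients have support contained in that of the normalized element, so $y_b$ has strictly smaller support and must be zero. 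Either way the point is that minimality is the engine, and the crossed-product invertibility of the $\overline g$ is what lets me always assume $e \in \supp(x)$ so that the difference trick genuinely reduces the support. Once $y_b = 0$ for all $b$, Theorem \ref{commutantcomm} finishes the argument.
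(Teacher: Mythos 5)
Your proposal is correct and is essentially the paper's own argument: the paper uses exactly the same two moves (multiplication by the invertible homogeneous elements $\unita\,\overline{g}$ to bring $e$ into the support without changing its cardinality, and the commutator $(a\overline{e})x - x(a\overline{e})$, whose $e$-coefficient vanishes while the others become $a_s(a-\sigma_s(a))$, together with the identification $\comm(\tilde{\aalg})=\bigcap_{a}\ker D_a$), running them as an explicit finite descent on the number of non-zero coefficients rather than via your minimal-support element. Your minimality packaging, including the normalization step using invertibility of $\unita\,\overline{g}$ in the crossed product, is just a cleaner phrasing of the same induction, so there is nothing substantively different to flag.
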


\begin{proof}
Let $\aalg$ be commutative. Then $\tilde{\aalg}$ is also commutative. Let $I \subseteq \gcrossedring$ be an arbitrary non-zero two-sided ideal in $\gcrossedring$. \\

\noindent \emph{Part 1:} \\
For each $g \in G$ we may define a \emph{translation-deformation operator}
\begin{displaymath}
T_g : \gcrossedring \to \gcrossedring, \quad \sum_{s \in G} a_s \overline{s} \mapsto \left(\sum_{s \in G} a_s \overline{s}\right) (\unita \overline{g}).
\end{displaymath}
Note that, for any $g \in G$, $I$ is invariant\footnote{By \emph{invariant} we mean that the set is \emph{closed} under this operation.} under $T_g$. We have
\begin{displaymath}
T_g \left( \sum_{s \in G} a_s \overline{s} \right) = \left(\sum_{s \in G} a_s \overline{s}\right) (\unita \overline{g}) = \sum_{s \in G} a_s \, \sigma_s(\unita) \, \alpha(s,g) \overline{sg} = \sum_{s \in G} a_s \, \alpha(s,g) \overline{sg}
\end{displaymath}
for every $g\in G$. It is important to note that if $a_s \neq \zeroa$, then $a_s \, \alpha(s,g) \neq \zeroa$ and hence this operation does not kill coefficients, it only translates and deformes them. If we have a non-zero element $\sum_{s \in G} a_s \overline{s}$ for which $a_e =\zeroa$, then we may pick some non-zero coefficient, say $a_p$ and apply the operator $T_{p^{-1}}$ to end up with
\begin{displaymath}
T_{p^{-1}} \left( \sum_{s \in G} a_s \overline{s} \right) = \sum_{s \in G} a_s \, \alpha(s,p^{-1}) \overline{sp^{-1}} := \sum_{s \in G} d_t \overline{t}.
\end{displaymath}
This resulting element will then have the following properties:
\begin{itemize}
\item $d_e = a_p \, \alpha(p,p^{-1}) \neq \zeroa$,
\item $\# \{ s\in G \, \mid \, a_s \neq \zeroa \} = \# \{ s\in G \, \mid \, d_s \neq \zeroa \}$.
\end{itemize}
\vspace{10pt}
\noindent \emph{Part 2:} \\
Next we define a \emph{kill operator}
\begin{displaymath}
D_a : \gcrossedring \to \gcrossedring, \quad \sum_{s \in G} a_s \overline{s} \mapsto (a \overline{e}) \left( \sum_{s \in G} a_s \overline{s} \right) - \left( \sum_{s \in G} a_s \overline{s} \right) (a \overline{e})
\end{displaymath}
for each $a \in \aalg$. Note that, for each $a \in \aalg$, $I$ is invariant under $D_a$. By assumption $\aalg$ is commutative and hence the above expression can be simplified
\begin{eqnarray*}
D_a \left( \sum_{s \in G} a_s \overline{s} \right) &=& (a \overline{e}) \left( \sum_{s \in G} a_s \overline{s} \right) - \left( \sum_{s \in G} a_s \overline{s} \right) (a \overline{e}) \\
&=& \left( \sum_{s \in G} a \, \sigma_{e}(a_s) \, \alpha(e,s) \, \overline{es} \right) - \left( \sum_{s \in G} a_s \, \sigma_{s}(a) \, \alpha(s,e) \, \overline{se} \right) \\
&=& \sum_{s \in G} \underbrace{a \, a_s}_{=a_s \, a} \overline{s} - \sum_{s \in G} a_s \, \sigma_{s}(a) \, \overline{s} = \sum_{s \in G} a_s \, (a-\sigma_{s}(a)) \overline{s} \\
&=& \sum_{s \neq e} a_s \, (a-\sigma_{s}(a)) \overline{s} = \sum_{s \neq e} d_s \overline{s}.
\end{eqnarray*}
The operators $\{D_a\}_{a \in \aalg}$ all share the property that they kill the coefficient in front $\overline{e}$. Hence, if $a_e \neq \zeroa$, then the number of non-zero coefficients of the resulting element will always be reduced by at least one. Note that 
$\comm(\tilde{\aalg}) = \bigcap_{a \in \aalg} \ker(D_a)$. This means that for each non-zero $\sum_{s \in G} a_s \overline{s}$ in $\gcrossedring \setminus \comm(\tilde{\aalg})$ we may always choose some $a \in A$ such that $\sum_{s \in G} a_s \overline{s} \not \in \ker(D_a)$. By choosing such an $a$ we note that, using the same notation as above, we get
\begin{displaymath}
\# \{s \in G \, \mid \, a_s \neq \zeroa \} \geq \# \{s \in G \, \mid \, d_s \neq \zeroa \} \geq 1
\end{displaymath}
for each non-zero $\sum_{s \in G} a_s \overline{s} \in \gcrossedring \setminus \comm(\tilde{\aalg})$. \\ \\
\noindent \emph{Part 3:} \\
The ideal $I$ is assumed to be non-zero, which means that we can pick some non-zero element $\sum_{s\in G} r_s \overline{s} \in I$. If $\sum_{s\in G} r_s \overline{s} \in \comm(\tilde{\aalg})$, then we are finished, so assume that this is not the case. Note that $r_s \neq \zeroa$ for finitely many $s \in G$. Recall that the ideal $I$ is invariant under $T_g$ and $D_a$ for all $g \in G$ and $a\in \aalg$. We may now use the operators $\{T_g\}_{g\in G}$ and $\{D_a\}_{a\in \aalg}$ to generate new elements of $I$. More specifically, we may use the $T_g$:s to translate our element $\sum_{s\in G} r_s \overline{s}$ into a new element which has a non-zero coefficient in front of $\overline{e}$ (if needed) after which we use the $D_a$ operator to kill this coefficient and end up with yet another new element of $I$ which is non-zero but has a smaller number of non-zero coefficients. We may repeat this procedure and in a finite number of iterations arrive at an element of $I$ which lies in $\comm(\tilde{\aalg}) \setminus \tilde{\aalg}$ and if not we continue the above procedure until we reach an element which is of the form $b \, \overline{e}$ with some non-zero $b \in \aalg$. In particular $\tilde{\aalg} \subseteq \comm(\tilde{\aalg})$ and hence $I \cap \comm(\tilde{\aalg}) \neq \{0\}$.
\end{proof}

The embedded base ring $\tilde{\aalg}$ is maximal commutative if and only if $\tilde{\aalg} = \comm(\tilde{\aalg})$ and hence we have the following corollary.

\begin{cor}\label{maxcommcorollary}
Let the subring $\tilde{\aalg}$ be maximal commutative in $\gcrossedring$. Then
\begin{displaymath}
I \cap \tilde{\aalg} \neq \{0\}
\end{displaymath}
for every non-zero two-sided ideal $I$ in $\gcrossedring$.
\end{cor}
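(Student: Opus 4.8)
The plan is to deduce this immediately from Theorem \ref{killerproof} by unravelling what maximal commutativity of $\tilde{\aalg}$ means. First I would observe that if $\tilde{\aalg}$ is maximal commutative in $\gcrossedring$, then in particular $\tilde{\aalg}$ is commutative, so by Remark \ref{AcommAtildecomm} the base ring $\aalg$ is commutative. This is exactly the standing hypothesis of Theorem \ref{killerproof}, so that theorem applies and gives $I \cap \comm(\tilde{\aalg}) \neq \{0\}$ for every non-zero two-sided ideal $I$ in $\gcrossedring$.

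Next I would use the definition of maximal commutativity: since $\aalg$ is commutative we have $\tilde{\aalg} \subseteq \comm(\tilde{\aalg})$, and $\tilde{\aalg}$ being maximal commutative says precisely that no strictly larger commutative subring exists, which (as $\comm(\tilde{\aalg})$ is itself commutative once equality with $\tilde{\aalg}$ holds, or by the standard characterisation recalled just before the statement) is equivalent to $\comm(\tilde{\aalg}) = \tilde{\aalg}$. Substituting this equality into the conclusion of Theorem \ref{killerproof} turns $I \cap \comm(\tilde{\aalg}) \neq \{0\}$ into $I \cap \tilde{\aalg} \neq \{0\}$, which is exactly the assertion to be proved.

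There is essentially no obstacle here: all the real work — the translation-deformation operators $T_g$, the kill operators $D_a$, and the finite descent on the number of non-zero coefficients — has already been carried out in the proof of Theorem \ref{killerproof}. The only points that require a line of justification are (a) that maximal commutativity forces $\aalg$ commutative, so that the hypothesis of Theorem \ref{killerproof} is met, and (b) that maximal commutativity is the same as $\comm(\tilde{\aalg}) = \tilde{\aalg}$. Both are routine, so the corollary follows in three lines.
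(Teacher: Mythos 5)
Your proposal is correct and follows exactly the route the paper intends: the remark immediately preceding the corollary identifies maximal commutativity of $\tilde{\aalg}$ with the equality $\tilde{\aalg} = \comm(\tilde{\aalg})$, and the corollary is then a direct substitution into the conclusion of Theorem \ref{killerproof}. Your additional observation (a), that maximal commutativity forces $\aalg$ to be commutative so the hypothesis of Theorem \ref{killerproof} is satisfied, is a point the paper leaves implicit, and spelling it out is a correct and harmless refinement rather than a different approach.
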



\begin{prop}\label{leftidealprop}
Let $I$ be a subset of $\aalg$ and define
\begin{displaymath}
J = \left\{ \sum_{s\in G} a_s \, \overline{s} \in \gcrossedring \, \mid \, a_s \in I \right\}.
\end{displaymath}
Then the following assertions hold:
\begin{itemize}
\item[(i)] If $I$ is a right ideal in $\aalg$, then $J$ is a right ideal in $\gcrossedring$.
\item[(ii)] If $I$ is a two-sided ideal in $\aalg$ such that $I \subseteq \aalg^G$, then $J$ is a two-sided ideal in $\gcrossedring$. 
\end{itemize}
\end{prop}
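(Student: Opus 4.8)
The plan is to verify the ideal conditions directly from the multiplication formula \eqref{product}, treating the two cases in turn. For (i), suppose $I$ is a right ideal of $\aalg$. To show $J$ is a right ideal of $\gcrossedring$ it suffices to check that $J$ is closed under addition (immediate, since $I$ is an additive subgroup) and that for any $\sum_{s} a_s \overline{s} \in J$ and any homogeneous element $b\overline{t} \in \gcrossedring$ (with $b\in\aalg$, $t\in G$) the product $\left(\sum_s a_s\overline{s}\right)(b\overline{t})$ again lies in $J$; the general case then follows by additivity. Now
\begin{displaymath}
\left(\sum_{s\in G} a_s\overline{s}\right)(b\overline{t}) = \sum_{s\in G} a_s\,\sigma_s(b)\,\alpha(s,t)\,\overline{st},
\end{displaymath}
and the coefficient of $\overline{g}$ (for $g = st$) is $a_s\,\sigma_s(b)\,\alpha(s,t)$. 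Since $a_s \in I$ and $I$ is a right ideal of $\aalg$, the element $a_s\cdot\big(\sigma_s(b)\,\alpha(s,t)\big)$ lies in $I$; summing over the finitely many relevant $s$ keeps us in $I$ because $I$ is closed under addition. Hence every coefficient of the product lies in $I$, so the product is in $J$, establishing (i).

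For (ii), assume in addition that $I$ is two-sided and $I\subseteq\aalg^G$. By (i), $J$ is already a right ideal, so it remains to check that $J$ is a left ideal, i.e. that $(b\overline{t})\left(\sum_s a_s\overline{s}\right)\in J$ for all $b\in\aalg$, $t\in G$. Compute
\begin{displaymath}
(b\overline{t})\left(\sum_{s\in G} a_s\overline{s}\right) = \sum_{s\in G} b\,\sigma_t(a_s)\,\alpha(t,s)\,\overline{ts}.
\end{displaymath}
Here the hypothesis $I\subseteq\aalg^G$ is exactly what is needed: since $a_s\in I\subseteq\aalg^G$ we have $\sigma_t(a_s)=a_s$, so the coefficient of $\overline{ts}$ becomes $b\,a_s\,\alpha(t,s)$. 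Because $I$ is a two-sided ideal of $\aalg$, $b\,a_s \in I$, and multiplying on the right by $\alpha(t,s)\in U(\aalg)$ keeps it in $I$; summing over the finitely many $s$ stays in $I$. Thus all coefficients lie in $I$ and the product is in $J$, so $J$ is two-sided.

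The only subtlety — and the step I would flag — is the role of the twisting cocycle $\alpha$ and the conjugation in condition (i) of the crossed system definition. One might worry that $\alpha(s,t)$ could carry a coefficient out of $I$, but since $\alpha$ takes values in $U(\aalg)$ this is harmless for a right (resp. left) ideal. The genuinely essential point is why the naive analogue of (ii) fails without $I\subseteq\aalg^G$: in the product $(b\overline{t})(\sum_s a_s\overline{s})$ the coefficient $b\,\sigma_t(a_s)\,\alpha(t,s)$ involves $\sigma_t(a_s)$, and without invariance there is no reason for $\sigma_t(a_s)$ to lie in $I$ (the $\sigma_t$ are ring automorphisms of $\aalg$ but need not preserve an arbitrary ideal). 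Requiring $I\subseteq\aalg^G$ neutralizes the action entirely, which is the cleanest hypothesis making the left-module computation go through; I would make this explicit in the write-up rather than leave it implicit.
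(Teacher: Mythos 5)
Your proof is correct and follows essentially the same route as the paper: a direct coefficientwise computation with the product formula, using that $I$ is a right ideal for (i) and that $\sigma_t(a_s)=a_s$ together with two-sidedness of $I$ for (ii) (the paper simply multiplies by a general element $\sum_t b_t\overline{t}$ rather than reducing to homogeneous ones, which is an immaterial difference). Your closing remark about why $I\subseteq\aalg^G$ is needed, and that invertibility of $\alpha(s,t)$ is not really the point since $I$ being a one-sided ideal already absorbs it, is accurate.
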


\begin{proof}
If $I$ is a (possibly one sided) ideal in $\aalg$, it is clear that $J$ is an additive subgroup of $\gcrossedring$.\\
(i). Let $I$ be a right ideal in $\aalg$. Then
\begin{eqnarray*}
\left( \sum_{s\in G} a_s \, \overline{s} \right) \left( \sum_{t \in G} b_t \, \overline{t} \right)  &=& \sum_{(s,t) \in G \times G} \underbrace{ a_s \, \sigma_s(b_t) \, \alpha(s,t) }_{\in I} \overline{st} \in J
\end{eqnarray*}
for arbitrary $\sum_{s\in G} a_s \, \overline{s} \in J$ and $\sum_{t \in G} b_t \, \overline{t} \in \gcrossedring$ and hence $J$ is a right ideal. \\
(ii). Let $I$ be a two-sided ideal in $\aalg$ such that $I \subseteq \aalg^G$. By (i) it is clear that $J$ is a right ideal. Let $\sum_{s\in G} a_s \, \overline{s} \in J$ and $\sum_{t \in G} b_t \, \overline{t} \in \gcrossedring$ be arbitrary. Then
\begin{eqnarray*}
\left( \sum_{t \in G} b_t \, \overline{t} \right) \left( \sum_{s\in G} a_s \, \overline{s} \right) = \sum_{(t,s) \in G \times G} b_t \, \sigma_t(a_s) \, \alpha(t,s) \overline{ts} = \sum_{(t,s) \in G \times G} \underbrace{ b_t \, a_s \, \alpha(t,s) }_{\in I} \overline{ts} \in J
\end{eqnarray*}
which shows that $J$ is also a left ideal.
\end{proof}

\begin{thm}\label{explicitideal}
Let $\sigma : G \to \aut(\aalg)$ be a group homomorphism and $N$ be a normal subgroup of $G$, contained in $\sigma^{-1}(\ida) = \{g\in G \mid \sigma_g = \ida\}$. Let $\varphi : G \to G/N$ be the quotient group homomorphism and suppose that $\alpha$ is such that $\alpha(s,t)=\unita$ whenever $s \in N$ or $t \in N$. Furthermore, suppose that there exists a map $\beta : G/N \times G/N \to U(\aalg)$ such that $\beta(\varphi(s),\varphi(t))= \alpha(s,t)$ for each $(s,t) \in G \times G$. Let $I$ be the ideal in $\gcrossedring$ generated by an element $\sum_{s\in N} a_s \, \overline{s}$ for which the coefficients (of which all but finitely many are zero) satisfy $\sum_{s\in N} a_s = \zeroa$. Then,
\begin{displaymath}
I \cap \tilde{\aalg} = \{0\}.
\end{displaymath}
\end{thm}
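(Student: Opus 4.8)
The plan is to construct an explicit linear functional (or a ring-theoretic projection) on $\gcrossedring$ that vanishes on $I$ but is nonzero on any nonzero element of $\tilde{\aalg}$, thereby forcing $I \cap \tilde{\aalg} = \{0\}$. The key observation is that, under the hypotheses, $\gcrossedring$ should admit a quotient onto a crossed product over the group $G/N$: since $N \subseteq \sigma^{-1}(\ida)$ is normal, $\sigma$ descends to a homomorphism $\bar{\sigma} : G/N \to \aut(\aalg)$, and the hypothesis that $\alpha$ factors through $\varphi \times \varphi$ via $\beta$ is precisely what is needed for $\{\aalg, G/N, \bar\sigma, \beta\}$ to be a $G/N$-crossed system (the cocycle conditions for $\beta$ are inherited from those for $\alpha$, using that $\alpha(s,t)=\unita$ when $s\in N$ or $t\in N$). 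First I would verify that the $\aalg$-linear map $\Phi : \gcrossedring \to \aalg \rtimes_{\bar\sigma}^{\beta} (G/N)$ sending $\sum_g a_g \overline{g} \mapsto \sum_g a_g \overline{\varphi(g)}$ is a surjective ring homomorphism; the multiplicativity reduces to $\sigma_s = \bar\sigma_{\varphi(s)}$ and $\alpha(s,t) = \beta(\varphi(s),\varphi(t))$, both of which hold by assumption.

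Next I would compute $\Phi$ on the generator. Since $\sum_{s\in N} a_s\overline{s}$ has all its support in $N$, we get $\Phi\!\left(\sum_{s\in N} a_s \overline{s}\right) = \left(\sum_{s\in N} a_s\right)\overline{\varphi(e)} = \zeroa$ by the defining condition $\sum_{s\in N} a_s = \zeroa$. Hence the generator of $I$ lies in $\ker \Phi$, and since $\ker\Phi$ is a two-sided ideal, $I \subseteq \ker\Phi$. Now suppose $b\,\overline{e} \in I \cap \tilde{\aalg}$ for some $b \in \aalg$. Then $\Phi(b\,\overline{e}) = b\,\overline{\varphi(e)} = \zeroa$ in $\aalg \rtimes_{\bar\sigma}^{\beta}(G/N)$, and since $\aalg$ is embedded in the latter as a free module component over the basis element $\overline{\varphi(e)}$, this forces $b = \zeroa$. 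Therefore $I \cap \tilde{\aalg} = \{0\}$.

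The main obstacle is confirming that $\Phi$ is genuinely a well-defined ring homomorphism, i.e. that the target really is a crossed product (equivalently, that $\beta$ satisfies cocycle conditions (ii) and (iii) and that $\bar\sigma$, $\beta$ satisfy (i)). Condition (iii) for $\beta$ follows since $e \in N$ forces $\varphi(e)$ to be the identity of $G/N$ and $\alpha(e,x)=\alpha(x,e)=\unita$. Conditions (i) and (ii) for $(\bar\sigma,\beta)$ follow by applying $\varphi$ to the corresponding identities for $(\sigma,\alpha)$: given cosets $\bar x,\bar y,\bar z$, pick representatives $x,y,z$, and the identity for $\beta$ at $(\bar x,\bar y,\bar z)$ is exactly the identity for $\alpha$ at $(x,y,z)$ after substituting $\beta(\varphi(\cdot),\varphi(\cdot)) = \alpha(\cdot,\cdot)$ and $\bar\sigma_{\varphi(x)} = \sigma_x$; one must check this is independent of the choice of representatives, which again uses that $\alpha$ and $\sigma$ are trivial on $N$. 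The only subtlety worth flagging is whether $\Phi$ preserves the grading in the right way — but this is automatic since $\Phi(\aalg\overline{g}) \subseteq \aalg\overline{\varphi(g)}$ and $\bigoplus_{\bar g} \aalg \overline{\bar g}$ is the grading of the target. Once $\Phi$ is established, the rest is the short computation above.
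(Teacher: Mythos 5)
Your proposal is correct and takes essentially the same approach as the paper: the paper likewise descends $\sigma$ to a homomorphism $\rho : G/N \to \aut(\aalg)$ via the universal property of the quotient, notes that $\beta$ is a $\rho$-cocycle so that $\aalg \rtimes_\beta^{\rho} G/N$ is a crossed product, and uses exactly your map $\sum_{s} a_s \overline{s} \mapsto \sum_{s} a_s \overline{\varphi(s)}$, which annihilates the generator of $I$ and is injective on $\tilde{\aalg}$, giving $I \cap \tilde{\aalg} = \{0\}$.
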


\begin{proof}
Let $I \subseteq \gcrossedring$ be the ideal generated by an element $\sum_{s\in N} a_s \, \overline{s}$, which satisfies $\sum_{s\in N} a_s = \zeroa$. The quotient homomorphism $\varphi : G \to G/N, \quad s \mapsto sN$ satisfies $\ker(\varphi)=N$. By assumption, the map $\sigma$ is a group homomorphism and $\sigma(N)= \ida$. Hence by the universal property, see for example \cite[p.16]{Lang}, there exists a unique group homomorphism $\rho$ making the following diagram commute:

\[
\xymatrix{
G \ar[r]^-\sigma \ar[d]_\varphi & \aut(\aalg) \\
G/N \ar@{-->}[ur]_\rho}
\]


\noindent By assumption there exists $\beta$ such that $\beta(\varphi(s),\varphi(t))= \alpha(s,t)$ for each $(s,t) \in G \times G$. One may verify that $\beta$ is a $\rho$-cocycle and hence we can define a new crossed product $\aalg \rtimes_\beta^{\rho} G/N$. We now define $\Gamma$ to be the map
\begin{displaymath}
\Gamma : \gcrossedring \to \aalg \rtimes_\beta^{\rho} G/N, \quad \sum_{s \in G} a_s \overline{s} \mapsto \sum_{s \in G} a_s \overline{\varphi(s)}
\end{displaymath}
and show that it is a ring homomorphism. For any two elements $\sum_{s \in G} a_s \overline{s}$ and $\sum_{t \in G} b_t \overline{t}$ in $\gcrossedring$, the additivity of $\Gamma$ follows by
\begin{eqnarray*}
\Gamma \left(\sum_{s \in G} a_s \overline{s} + \sum_{t \in G} b_t \overline{t} \right) &=& \Gamma \left(\sum_{s \in G} (a_s + b_s) \overline{s} \right) = \sum_{s \in G} (a_s + b_s) \overline{\varphi(s)} \\
&=& \sum_{s \in G} a_s \overline{\varphi(s)} + \sum_{t \in G} b_t \overline{\varphi(t)} = \Gamma \left(\sum_{s \in G} a_s \overline{s} \right) + \Gamma \left(\sum_{t \in G} b_t \overline{t} \right)
\end{eqnarray*}
and due to the assumptions, the multiplicativity follows by
\begin{eqnarray*}
\Gamma \left(\sum_{s \in G} a_s \overline{s} \sum_{t \in G} b_t \overline{t} \right) &=&
\Gamma \left(\sum_{(s,t) \in G \times G} a_s \sigma_s(b_t) \, \alpha(s,t) \, \overline{st} \right) \\
&=& \sum_{(s,t) \in G \times G} \, a_s \, \sigma_s(b_t) \, \alpha(s,t) \, \overline{\varphi(st)} \\
&=& \sum_{(s,t) \in G \times G} \, a_s \, \rho_{\varphi(s)}(b_t) \, \alpha(s,t) \, \overline{\varphi(s) \varphi(t)} \\
&=& \sum_{(s,t) \in G \times G} \, a_s \, \rho_{\varphi(s)}(b_t) \, \beta(\varphi(s),\varphi(t))\, \overline{\varphi(s) \varphi(t)} \\
&=& \left(\sum_{s \in G} a_s \overline{\varphi(s)} \right) \left(\sum_{t \in G} b_t \overline{\varphi(t)} \right) = \Gamma \left(\sum_{s \in G} a_s \overline{s} \right) \, \Gamma \left(\sum_{t \in G} b_t \overline{t} \right) \\
\end{eqnarray*}
and hence $\Gamma$ defines a ring homomorphism. We shall note that the generator of $I$ is mapped onto zero, i.e.
\begin{displaymath}
\Gamma \left( \sum_{s \in N} a_s \, \overline{s} \right)
= \sum_{s \in N} a_s \, \overline{\varphi(s)} = \sum_{s \in N} a_s \, \overline{N} =
\left( \sum_{s \in N} a_s \right) \, \overline{N} = \zeroa \, \overline{N} = 0
\end{displaymath}
and hence $\Gamma \lvert_{I} \equiv 0$. Furthermore, we see that
\begin{displaymath}
\Gamma \left(b \overline{e} \right) = 0 \quad \Longrightarrow \quad b \overline{\varphi(e)}=0 \Leftrightarrow b \overline{N}=0 \Leftrightarrow b = \zeroa \Leftrightarrow b\overline{e} = 0
\end{displaymath}
and hence $\Gamma\lvert_{\tilde{\aalg}}$ is injective. We may now conclude that if $c \in I \cap \tilde{\aalg}$, then $\Gamma(c)=0$ and so necessarily $c=0$. This shows that $I \cap \tilde{\aalg} = \{0\}$.
\end{proof}

When $\aalg$ is commutaive, $\sigma$ is automatically a group homomorphism and we get the following corollary.

\begin{cor}\label{explicitideal2}
Let $\aalg$ be commutative and $N \subseteq \sigma^{-1}(\ida) = \{g\in G \mid \sigma_g = \ida\}$ a normal subgroup of $G$. Let $\varphi : G \to G/N$ be the quotient group homomorphism and suppose that $\alpha$ is such that $\alpha(s,t)=\unita$ whenever $s \in N$ or $t \in N$. Furthermore, suppose that there exists a map $\beta : G/N \times G/N \to U(\aalg)$ such that $\beta(\varphi(s),\varphi(t))= \alpha(s,t)$ for each $(s,t) \in G \times G$. Let $I$ be the ideal in $\gcrossedring$ generated by an element $\sum_{s\in N} a_s \, \overline{s}$ for which the coefficients (of which all but finitely many are zero) satisfy $\sum_{s\in N} a_s = \zeroa$. Then,
\begin{displaymath}
I \cap \tilde{\aalg} = \{0\}.
\end{displaymath}
\end{cor}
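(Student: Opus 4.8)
The plan is to deduce Corollary~\ref{explicitideal2} directly from Theorem~\ref{explicitideal} by verifying that the extra hypothesis of that theorem---namely that $\sigma : G \to \aut(\aalg)$ is a group homomorphism---is automatic once $\aalg$ is assumed commutative. Everything else in the hypotheses of Corollary~\ref{explicitideal2} is word-for-word identical to the hypotheses of Theorem~\ref{explicitideal}: $N$ is a normal subgroup of $G$ contained in $\sigma^{-1}(\ida)$, $\varphi : G \to G/N$ is the quotient map, $\alpha(s,t) = \unita$ whenever $s \in N$ or $t \in N$, and $\beta : G/N \times G/N \to U(\aalg)$ lifts $\alpha$ along $\varphi$; the ideal $I$ is generated by the same kind of element $\sum_{s \in N} a_s \overline{s}$ with $\sum_{s \in N} a_s = \zeroa$. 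So the corollary is literally a specialization.

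The one point that needs a sentence is the claim that commutativity of $\aalg$ forces $\sigma$ to be a group homomorphism. This is exactly Remark~\ref{conditionsAcomm}: condition (i) in the definition of a $G$-crossed system reads $\sigma_x(\sigma_y(a)) = \alpha(x,y)\,\sigma_{xy}(a)\,\alpha(x,y)^{-1}$, and when $\aalg$ is commutative the conjugation by $\alpha(x,y) \in U(\aalg)$ is trivial, so $\sigma_x \circ \sigma_y = \sigma_{xy}$ for all $x,y \in G$. Combined with $\sigma_e = \ida$ from Remark~\ref{zeroandone}, this says precisely that $\sigma$ is a homomorphism $G \to \aut(\aalg)$.

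Thus I would write: since $\aalg$ is commutative, Remark~\ref{conditionsAcomm} gives that $\sigma : G \to \aut(\aalg)$ is a group homomorphism, so all hypotheses of Theorem~\ref{explicitideal} are satisfied, and the conclusion $I \cap \tilde{\aalg} = \{0\}$ follows immediately. There is no genuine obstacle here---the corollary is a direct transcription of the theorem under a hypothesis that subsumes the homomorphism assumption, and the only thing to be careful about is to cite the right remark for why commutativity yields the homomorphism property. (One could equally note that the universal-property argument producing $\rho$ in the proof of Theorem~\ref{explicitideal} is exactly what needs $\sigma$ to be a homomorphism, so once that is in place the whole machinery---the map $\Gamma$, its multiplicativity via $\beta$, the vanishing on the generator of $I$, and the injectivity of $\Gamma|_{\tilde{\aalg}}$---carries over verbatim.)

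\begin{proof}
Since $\aalg$ is commutative, Remark~\ref{conditionsAcomm} shows that $\sigma : G \to \aut(\aalg)$ is a group homomorphism. All the remaining hypotheses coincide with those of Theorem~\ref{explicitideal}, and hence the conclusion $I \cap \tilde{\aalg} = \{0\}$ follows directly from that theorem.
\end{proof}
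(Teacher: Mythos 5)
Your proposal is correct and is exactly the paper's route: the paper derives Corollary~\ref{explicitideal2} from Theorem~\ref{explicitideal} by the single observation that commutativity of $\aalg$ makes $\sigma$ a group homomorphism (Remark~\ref{conditionsAcomm}), which is precisely what you argue.
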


When $\alpha \equiv \unita$ there is no need to assume that $\aalg$ is commutative, in order to make $\sigma$ a group homomorphism. In this case we may choose $\beta \equiv \unita$. Thus, by Theorem \ref{explicitideal} we have the following corollaries.

\begin{cor}\label{explicitidealtrivial}
Let $\alpha \equiv \unita$ and $N \subseteq \sigma^{-1}(\ida) = \{g\in G \mid \sigma_g = \ida\}$ be a normal subgroup of $G$. Let $I$ be the ideal in $\gcrossedring$ generated by an element $\sum_{s\in N} a_s \, \overline{s}$ for which the coefficients (of which all but finitely many are zero) satisfy $\sum_{s\in N} a_s = \zeroa$. Then,
\begin{displaymath}
I \cap \tilde{\aalg} = \{0\}.
\end{displaymath}
\end{cor}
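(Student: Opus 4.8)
The plan is to obtain this statement as an immediate special case of Theorem \ref{explicitideal}, so the work consists entirely in checking that each hypothesis of that theorem is satisfied under the present assumptions. First I would note that $\sigma : G \to \aut(\aalg)$ is automatically a group homomorphism: since $\alpha \equiv \unita$, condition (i) in the definition of a $G$-crossed system collapses to $\sigma_x(\sigma_y(a)) = \sigma_{xy}(a)$ for all $x,y \in G$ and $a \in \aalg$, which is exactly the content of Remark \ref{conditionsAcomm}. Thus the first requirement of Theorem \ref{explicitideal} holds with no commutativity assumption on $\aalg$.

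Next, $N$ is assumed to be a normal subgroup of $G$ contained in $\sigma^{-1}(\ida)$, which is precisely the second hypothesis of Theorem \ref{explicitideal}. For the condition on the cocycle, since $\alpha \equiv \unita$ we trivially have $\alpha(s,t) = \unita$ whenever $s \in N$ or $t \in N$ (indeed for every $(s,t) \in G \times G$). For the remaining hypothesis I would simply take $\beta : G/N \times G/N \to U(\aalg)$ to be the constant map $\beta \equiv \unita$; then $\beta(\varphi(s),\varphi(t)) = \unita = \alpha(s,t)$ for every $(s,t) \in G \times G$, so such a $\beta$ exists, as noted in the paragraph preceding the statement. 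Finally, the generator $\sum_{s\in N} a_s \, \overline{s}$ subject to $\sum_{s\in N} a_s = \zeroa$ is exactly of the form prescribed in Theorem \ref{explicitideal}.

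With every hypothesis verified, Theorem \ref{explicitideal} applies directly and yields $I \cap \tilde{\aalg} = \{0\}$, which is the desired conclusion. There is essentially no obstacle here: the only point meriting a moment's thought is the observation that a trivial $\sigma$-cocycle forces $\sigma$ to be a homomorphism, and that is immediate from condition (i). If one preferred a self-contained argument, one could unwind the proof of Theorem \ref{explicitideal} in this case, where $\rho$ is the homomorphism induced on $G/N$ by $\sigma$ and $\beta \equiv \unita$, so that $\Gamma : \gcrossedring \to \aalg \rtimes_\beta^{\rho} G/N$, $\sum_{s} a_s \overline{s} \mapsto \sum_{s} a_s \overline{\varphi(s)}$, annihilates the generator of $I$ (because $\sum_{s\in N} a_s = \zeroa$) while restricting to an injection on $\tilde{\aalg}$; but invoking the theorem directly is cleaner and is the route I would take.
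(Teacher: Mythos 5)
Your proposal is correct and is exactly the paper's route: the paper derives this corollary from Theorem \ref{explicitideal} by observing that a trivial cocycle forces $\sigma$ to be a group homomorphism (Remark \ref{conditionsAcomm}) and by choosing $\beta \equiv \unita$. Your verification of the remaining hypotheses matches what the paper leaves implicit, so there is nothing to add.
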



\begin{cor}\label{nontrivialintersectionimplytrivialsigma}
Let $\alpha \equiv \unita$. Then the following implication holds:
\begin{center}
{\rm(i)} $Z(G) \cap \sigma^{-1}(\ida) \neq \{e\}$. \\
$\Downarrow$ \\
{\rm (ii)} For each $g\in Z(G) \cap \sigma^{-1}(\ida)$, the ideal $I_g$ generated by the element $\sum_{n\in \integers} a_n \, \overline{g^n}$ for which $\sum_{n \in \integers} a_n =\zeroa$ has the property
$I_g \cap \tilde{\aalg} = \{0\}$.
\end{center}
\end{cor}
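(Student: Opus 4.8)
\emph{Proof proposal.} The plan is to deduce this from Corollary~\ref{explicitidealtrivial} by taking the normal subgroup there to be the cyclic group generated by $g$. So fix $g \in Z(G) \cap \sigma^{-1}(\ida)$ and set $N = \langle g \rangle = \{g^n \mid n \in \integers\}$. If $g = e$ then the generating element equals $\big(\sum_{n \in \integers} a_n\big)\overline{e} = \zeroa\,\overline{e} = 0$, so $I_g = \{0\}$ and there is nothing to prove; assume henceforth $g \neq e$.

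First I would verify that $N$ fulfils the hypotheses of Corollary~\ref{explicitidealtrivial}. Since $\alpha \equiv \unita$, Remark~\ref{conditionsAcomm} tells us that $\sigma \colon G \to \aut(\aalg)$ is a group homomorphism; hence $\sigma^{-1}(\ida) = \ker(\sigma)$ is a subgroup of $G$, and from $g \in \sigma^{-1}(\ida)$ we get $g^n \in \sigma^{-1}(\ida)$ for all $n \in \integers$, i.e. $N \subseteq \sigma^{-1}(\ida)$. Moreover, $g \in Z(G)$ forces every power $g^n$ to be central, so $N \subseteq Z(G)$ and in particular $N$ is a normal subgroup of $G$. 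Thus the hypotheses of Corollary~\ref{explicitidealtrivial} (with this $N$, and with $\alpha \equiv \unita$) are satisfied.

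Next I would rewrite the generator of $I_g$ in the form required by Corollary~\ref{explicitidealtrivial}: collecting equal group elements, $\sum_{n \in \integers} a_n\,\overline{g^n} = \sum_{s \in N} b_s\,\overline{s}$ where $b_s = \sum_{\{n \in \integers \mid g^n = s\}} a_n$ (a finite sum for each $s$, with $b_s = \zeroa$ for all but finitely many $s \in N$). Summing over $s \in N$ and reindexing by $\integers$ gives $\sum_{s \in N} b_s = \sum_{n \in \integers} a_n = \zeroa$. Hence $I_g$ is precisely the ideal generated by an element $\sum_{s\in N} b_s\,\overline{s}$ whose coefficients sum to $\zeroa$, so Corollary~\ref{explicitidealtrivial} yields $I_g \cap \tilde{\aalg} = \{0\}$, as desired. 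The hypothesis (i) enters only to guarantee that $Z(G) \cap \sigma^{-1}(\ida)$ contains some $g \neq e$, so that conclusion (ii) is a genuine (non-vacuous) statement; the argument itself in fact establishes (ii) for every $g$ in that intersection.

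I do not expect a serious obstacle here. The only mild subtleties are the bookkeeping when $g$ has finite order — so that $n \mapsto g^n$ is not injective and one must genuinely pass from the $\integers$-indexed sum to the $N$-indexed sum while checking that the coefficient-sum condition is preserved — and the degenerate case $g = e$; both are handled above.
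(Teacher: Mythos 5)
Your proposal is correct and follows essentially the same route as the paper's own proof: take $N=\langle g\rangle$, observe that centrality of $g$ gives normality and that $\sigma$ being a group homomorphism (since $\alpha\equiv\unita$) gives $N\subseteq\sigma^{-1}(\ida)$, then apply Corollary~\ref{explicitidealtrivial}. Your additional bookkeeping — reindexing the $\integers$-indexed sum over $N$ when $g$ has finite order, and the trivial case $g=e$ — is a slightly more careful treatment of details the paper passes over silently, but not a different argument.
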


\begin{proof}
Suppose that there exists a $g \in (Z(G) \cap \sigma^{-1}(\ida)) \setminus\{e\}$. Let $I_g \subseteq \gcrossedring$ be the ideal generated by $\sum_{n\in \integers} a_n \, \overline{g^n}$, where $\sum_{n \in \integers} a_n =\zeroa$. The element $g$ commutes with each element of $G$ and hence the cyclic subgroup $N = \langle g \rangle$ generated by $g$ is normal in $G$ and since $\sigma$ is a group homomorphism $N \subseteq \sigma^{-1}(\ida)$. Hence $I_g \cap \tilde{\aalg} = \{0\}$ by Corollary \ref{explicitidealtrivial}.
\end{proof}

\begin{cor}\label{idealtoaction}
Let $\alpha \equiv \unita$ and $G$ be abelian. Then the following implication holds:
\begin{center}
{\rm (i)} $I \cap \tilde{\aalg} \neq \{0\}$, for all non-zero two-sided ideals $I$ in $\gcrossedring$. \\
$\Downarrow$ \\
{\rm(ii)} $\sigma_g \neq \ida$ for all $g \in G \setminus \{e\}$.
\end{center}
\end{cor}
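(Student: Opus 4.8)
The plan is to prove the implication by contraposition: assume that \textrm{(ii)} fails and construct a single non-zero two-sided ideal $I$ in $\gcrossedring$ with $I \cap \tilde{\aalg} = \{0\}$, which contradicts \textrm{(i)}. So suppose there exists $g \in G \setminus \{e\}$ with $\sigma_g = \ida$. Since the cocycle $\alpha \equiv \unita$ is trivial, Remark \ref{conditionsAcomm} guarantees that $\sigma : G \to \aut(\aalg)$ is a group homomorphism, so $\sigma^{-1}(\ida) = \{h \in G \mid \sigma_h = \ida\}$ is a subgroup of $G$; it therefore contains the cyclic subgroup $N = \langle g \rangle$. Because $G$ is abelian, $N$ is automatically normal in $G$. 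Hence $N$ is a normal subgroup of $G$ contained in $\sigma^{-1}(\ida)$, which is precisely the hypothesis needed to apply Corollary \ref{explicitidealtrivial} (recall that when $\alpha \equiv \unita$ one may take $\beta \equiv \unita$, so no further assumptions on $\alpha$ are required).

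Next I would exhibit an explicit non-zero generator supported on $N$ whose coefficients sum to $\zeroa$. Since $e \neq g$ and $\gcrossedring = \bigoplus_{s \in G} \aalg \overline{s}$ is free as a left $\aalg$-module on the basis $\{\overline{s}\}_{s \in G}$, the element $\unita \overline{e} - \unita \overline{g}$ is non-zero; it lies in $\sum_{s \in N} \aalg \overline{s}$, and its coefficients sum to $\unita - \unita = \zeroa$. Let $I$ be the two-sided ideal of $\gcrossedring$ generated by this element. Then $I \neq \{0\}$, while Corollary \ref{explicitidealtrivial} (applied with this $N$ and this generator) yields $I \cap \tilde{\aalg} = \{0\}$. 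This contradicts \textrm{(i)}, so no such $g$ can exist, i.e. $\sigma_g \neq \ida$ for every $g \in G \setminus \{e\}$, which is \textrm{(ii)}.

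Once Corollary \ref{explicitidealtrivial} is invoked the argument is essentially bookkeeping; the only point that needs a moment's care is checking that the chosen generator $\unita \overline{e} - \unita \overline{g}$ is genuinely non-zero, which relies on $g \neq e$ together with the freeness of $\gcrossedring$ over $\aalg$ (and, implicitly, on $\aalg$ being a non-trivial ring, so that $\unita \neq \zeroa$). I do not anticipate any real obstacle beyond this.
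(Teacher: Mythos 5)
Your proof is correct and follows essentially the same route as the paper: the paper also argues by contraposition, takes the ideal generated by $\unita\,\overline{e} - \unita\,\overline{g}$, and concludes via Corollary \ref{nontrivialintersectionimplytrivialsigma}, whose own proof is precisely your observation that $N = \langle g \rangle$ is a normal subgroup contained in $\sigma^{-1}(\ida)$ so that Corollary \ref{explicitidealtrivial} applies. The only (inessential) difference is that you invoke Corollary \ref{explicitidealtrivial} directly instead of passing through that intermediate corollary.
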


\begin{proof}[Proof by contrapositivity]

Since $G$ is abelian, $G=Z(G)$. Suppose that (ii) is false, i.e. there exists $g \in G \setminus \{e\}$ such that $\sigma_g = \ida$. Pick such a $g$ and let $I_g \subseteq \gcrossedring$ be the ideal generated by $\unita \, \overline{e} - \unita \, \overline{g}$. Then obviously $I_g \neq \{0\}$ and by Corollary \ref{nontrivialintersectionimplytrivialsigma} we get $I_g \cap \tilde{\aalg} = \{0\}$ and hence (i) is false. This concludes the proof.
\end{proof}

\begin{exmp}
We should note that in the proof of Corollary \ref{idealtoaction} one could have chosen the ideal in many different ways. The ideal generated by $\unita \overline{e} - \unita \overline{g} + \unita \overline{g^2} - \unita \overline{g^3} + \ldots + \unita \overline{g^{2n}} - \unita \overline{g^{2n+1}} = (\unita \overline{e} - \unita \overline{g}) \sum_{k=0}^{n} \unita \, g^{2k}$ is contained in the ideal $I_g$, generated by $\unita \overline{e} - \unita \overline{g}$, and therefore it has zero intersection with $\tilde{\aalg}$ if $I_g \cap \tilde{\aalg} = \{0\}$. Also note that for $\alpha \equiv \unita$ we may always write
\begin{displaymath}
\unita \overline{e} - \unita \overline{g^n} = (\unita \overline{e} - \unita \overline{g})(\sum_{k=0}^{n-1} \unita \, g^k)
\end{displaymath}
and hence $\unita \overline{e} - \unita \overline{g}$ is a zero-divisor in the crossed product $\gcrossedring$ whenever $g$ is a torsion element.
\end{exmp}

\begin{exmp}
We now give an example of how one may choose $\beta$ as in Theorem \ref{explicitideal}. Let $N \subseteq \sigma^{-1}(\ida)$ be a normal subgroup of $G$ such that for $g\in N$, $\alpha(s,g)=\unita$ for all $s\in G$ and let $\alpha$ be symmetric. Since $\alpha$ is the $\sigma$-cocycle map of a $G$-system, we get
\begin{eqnarray*}
\alpha(g,s) \, \alpha(gs,t) = \sigma_g(\alpha(s,t)) \, \alpha(g,st) \Longleftrightarrow
\alpha(g,s) \, \alpha(gs,t) = \alpha(s,t) \, \alpha(g,st) \\
\Longleftrightarrow \alpha(gs,t) = \alpha(s,t)
\end{eqnarray*}
for all $(s,t) \in G \times G$. Using the last equality and the symmetry of $\alpha$ we immediately see that
\begin{displaymath}
\alpha(g s, h t)= \alpha(s,t) \quad \forall s,t \in G
\end{displaymath}
for all $g,h \in N$. The last equality means that $\alpha$ is constant on the pairs of right cosets which coincide with the left cosets by normality of $N$. It is therefore clear that we can define $\beta : G/N \times G/N \to \aut(\aalg)$ by $\beta(\varphi(s),\varphi(t)) := \alpha(s,t)$ for any $s,t \in G$.
\end{exmp}

\begin{thm}\label{idealtomaxcomm}
Let $\aalg$ be an integral domain, $G$ an abelian group and $\alpha \equiv \unita$. Then the following implication holds:
\begin{center}
(i) $I \cap \tilde{\aalg} \neq \{0\}$, for every non-zero two-sided ideal $I$ in $\gcrossedring$. \\
$\Downarrow$ \\
(ii) $\tilde{\aalg}$ is a maximal commutative subring in $\gcrossedring$.
\end{center}
\end{thm}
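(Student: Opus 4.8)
The plan is to argue by contrapositivity and reduce everything to results already in hand. Suppose (ii) fails, that is, $\tilde{\aalg}$ is \emph{not} maximal commutative in $\gcrossedring$. Since $\aalg$ is an integral domain, Corollary \ref{maxabeliansigmaconverse} (together with Remark \ref{injectivityofsigma}) tells us this is equivalent to the existence of some $g \in G \setminus \{e\}$ with $\sigma_g = \ida$. From such a $g$ I would then exhibit an explicit non-zero two-sided ideal meeting $\tilde{\aalg}$ only in $0$, thereby contradicting (i).

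For the construction of that ideal I would invoke Corollary \ref{nontrivialintersectionimplytrivialsigma}: since $G$ is abelian we have $Z(G) = G$, so $g \in (Z(G) \cap \sigma^{-1}(\ida)) \setminus \{e\}$, and by assumption $\alpha \equiv \unita$, so the corollary applies. Taking the particular coefficient sequence $a_0 = \unita$, $a_1 = -\unita$ and $a_n = \zeroa$ for all other $n$ (which satisfies $\sum_{n \in \integers} a_n = \zeroa$), the corollary yields that the ideal $I_g$ generated by $\unita \, \overline{e} - \unita \, \overline{g}$ satisfies $I_g \cap \tilde{\aalg} = \{0\}$. Since $g \neq e$, the generator $\unita \, \overline{e} - \unita \, \overline{g}$ is a non-zero element of $\gcrossedring$, so $I_g \neq \{0\}$. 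This produces a non-zero two-sided ideal with trivial intersection with $\tilde{\aalg}$, so (i) is false; this establishes the contrapositive.

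Equivalently, and more compactly, one may simply chain the implications already available: (i) $\Rightarrow$ ``$\sigma_g \neq \ida$ for all $g \in G \setminus \{e\}$'' by Corollary \ref{idealtoaction}, whose hypotheses ($G$ abelian and $\alpha \equiv \unita$) are exactly those assumed here; and then ``$\sigma_g \neq \ida$ for all $g \in G \setminus \{e\}$'' $\Rightarrow$ ``$\tilde{\aalg}$ is maximal commutative in $\gcrossedring$'' by Corollary \ref{maxabeliansigmaconverse}, using that $\aalg$ is an integral domain. Either route proves the theorem.

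I do not expect a serious obstacle: the substantive content sits in Theorem \ref{explicitideal} and in Corollary \ref{maxabeliansigmaconverse}, both available. The only points requiring a little care are verifying that the hypotheses of those results line up with the present ones (immediate: commutativity of $\aalg$ is subsumed by integral domain, $G$ abelian gives $Z(G)=G$, and $\alpha \equiv \unita$ is assumed) and checking that the ideal chosen to defeat (i) is genuinely non-zero, which is clear.
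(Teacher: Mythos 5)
Your proposal is correct and matches the paper's proof, which is exactly the one-line chaining of Corollary \ref{idealtoaction} with Corollary \ref{maxabeliansigmaconverse}; your first, contrapositive route merely unpacks the proof of Corollary \ref{idealtoaction} (via Corollary \ref{nontrivialintersectionimplytrivialsigma} and the ideal generated by $\unita\,\overline{e}-\unita\,\overline{g}$), so both versions are essentially the paper's argument.
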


\begin{proof}
This follows from Corollary \ref{maxabeliansigmaconverse} and Corollary \ref{idealtoaction}.
\end{proof}

\begin{exmp}[The quantum torus]
Let $q \in \complex \setminus \{0, 1\}$ and denote by \\ $\complex_q[x,x^{-1},y,y^{-1}]$ the \emph{twisted Laurent polynomial ring} in two non-commuting variables under the twisting
\begin{eqnarray}\label{qformula}
y \, x = q \, x \, y.
\end{eqnarray}
The ring $\complex_q[x,x^{-1},y,y^{-1}]$ is known as the \emph{quantum torus}. Now let
\begin{itemize}
\item $\aalg = \complex[x,x^{-1}]$,
\item $G = (\integers,+)$,
\item $\sigma_n : P(x) \mapsto P(q^n x)$, $n \in G$,
\item $\alpha(s,t) = \unita$ for all $s,t \in G$.
\end{itemize}
It is easily verified that $\sigma$ and $\alpha$ together satisfy conditions (i)-(iii) of $G$-systems and it is not hard to see that $\gcrossedring \cong \complex_q[x,x^{-1},y,y^{-1}]$. In the current example, $\aalg$ is an integral domain, $G$ is abelian, $\alpha \equiv \unita$ and hence all the conditions of Theorem \ref{idealtomaxcomm} are satisfied. Note that the commutation relation \eqref{qformula} implies
\begin{eqnarray}\label{qmulti}
y^n \, x^m = q^{mn} \, x^m \, y^n, \quad \forall n,m \in \integers.
\end{eqnarray}
It is important to distinguish between two different cases:
\begin{case}[$q$ is a root of unity]
Suppose that $q^n = 1$ for some $n\neq 0$. From equality \eqref{qmulti} we note that $y^n \in Z(\complex_q[x,x^{-1},y,y^{-1}])$ and hence $\complex[x,x^{-1}]$ is not maximal commutative in $\complex_q[x,x^{-1},y,y^{-1}]$. Thus, according to Theorem \ref{idealtomaxcomm}, there must exist some non-zero ideal $I$ which has zero intersection with $\complex[x,x^{-1}]$.
\end{case}
\begin{case}[$q$ is not a root of unity]
Suppose that $q^n \neq 1$ for all $n \in \integers \setminus \{0\}$. One can show that this implies that $\complex_q[x,x^{-1},y,y^{-1}]$ is \emph{simple}. This means that the only non-zero ideal is $\complex_q[x,x^{-1},y,y^{-1}]$ itself and this ideal obviously intersects $\complex[x,x^{-1}]$ non-trivially. Hence, by Theorem \ref{idealtomaxcomm}, we conclude that $\complex[x,x^{-1}]$ is maximal commutative in $\complex_q[x,x^{-1},y,y^{-1}]$.
\end{case}
\end{exmp}

\section{Ideals, intersections and zero-divisors}

Let $D$ denote the subset of zero-divisors in $\aalg$ and note that $D$ is always non-empty since $\zeroa \in D$. By $\tilde{D}$ we denote the image of $D$ under the embedding $\iota$.


\begin{thm}\label{idealszerodivisors}
Let $\aalg$ be commutative. Then the following implication holds:
\begin{center}
(i) $I \cap \left( \tilde{\aalg} \setminus \tilde{D} \right) \neq \emptyset$, for every non-zero two-sided ideal $I$ in $\gcrossedring$. \\
$\Downarrow$ \\
(ii) $D \cap \aalg^G = \{ \zeroa \}$, i.e. the only zero-divisor that is fixed under all automorphisms is $\zeroa$.
\end{center}
\end{thm}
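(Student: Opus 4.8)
The plan is to argue by contrapositivity: assuming that (ii) fails, I will construct an explicit nonzero two-sided ideal of $\gcrossedring$ witnessing the failure of (i).

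So suppose $D \cap \aalg^G \neq \{\zeroa\}$, and fix $a \in D \cap \aalg^G$ with $a \neq \zeroa$. The first key observation is that $a\overline{e}$ is a \emph{central} element of $\gcrossedring$: for any $\sum_{s\in G} b_s\overline{s}$, using $\sigma_s(a) = a$ for all $s$ (since $a \in \aalg^G$), the identities $\alpha(s,e) = \alpha(e,s) = \unita$, and the commutativity of $\aalg$, one checks
\[
(a\overline{e})\Bigl(\textstyle\sum_{s} b_s\overline{s}\Bigr) = \sum_s a b_s\,\overline{s} = \sum_s b_s a\,\overline{s} = \Bigl(\textstyle\sum_{s} b_s\overline{s}\Bigr)(a\overline{e}).
\]
Hence $I := (a\overline{e})\,\gcrossedring = \gcrossedring\,(a\overline{e})$ is a two-sided ideal (the two-sided ideal generated by $a\overline{e}$, since the latter is central), and it is nonzero because $a\overline{e} = (a\overline{e})(\unita\overline{e}) \in I$ and $a \neq \zeroa$.

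Next I would compute $I \cap \tilde{\aalg}$. A general element of $I$ has the form $\bigl(\sum_s b_s\overline{s}\bigr)(a\overline{e}) = \sum_s b_s a\,\overline{s}$, which lies in $\tilde{\aalg}$ precisely when $b_s a = \zeroa$ for all $s \neq e$; in that case it equals $(b_e a)\overline{e}$. Therefore $I \cap \tilde{\aalg} = \{\,(ba)\overline{e} \mid b \in \aalg\,\}$. Since $a$ is a nonzero zero-divisor, there is $c \in \aalg$ with $c \neq \zeroa$ and $ac = \zeroa$; then for every $b \in \aalg$ we have $(ba)c = (ac)b = \zeroa$, so $ba$ is either $\zeroa$ or a nonzero zero-divisor. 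In both cases $(ba)\overline{e} \in \tilde{D}$ (recall $\zeroa \in D$). Consequently $I \cap (\tilde{\aalg}\setminus\tilde{D}) = \emptyset$ while $I$ is a nonzero two-sided ideal, so (i) fails. This establishes the contrapositive and hence the theorem.

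There is essentially no deep step here; the only points requiring care are the bookkeeping that guarantees $a\overline{e}$ is central — so that the ideal it generates is simply $\gcrossedring\,(a\overline{e})$ and its intersection with $\tilde{\aalg}$ is transparent — together with the harmless but necessary remark that $\zeroa \in \tilde{D}$, so that vanishing products $ba = \zeroa$ do not accidentally produce an element outside $\tilde{D}$.
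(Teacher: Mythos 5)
Your proof is correct, but it takes a genuinely different and more elementary route than the paper. Both arguments proceed by contrapositivity from a nonzero $a\in D\cap\aalg^G$, but the paper's proof passes to the quotient $\aalg/\ann(a)$, verifies that $\{\aalg/\ann(a),G,\rho,\beta\}$ is again a $G$-crossed system, builds the coefficient-reduction ring homomorphism $\Gamma:\gcrossedring\to\aalg/\ann(a)\rtimes_\beta^\rho G/\!\!\;$ (no quotient of $G$ here, just of $\aalg$), and takes as its witness the ideal generated by $d\,\overline{g}$ with $g\neq e$ and $d$ a nonzero annihilator of $a$; the conclusion comes from $\Gamma$ vanishing on that ideal while $\ker(\theta)=\ann(a)\subseteq D$. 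You instead exploit the hypotheses directly: since $\aalg$ is commutative and $\sigma_s(a)=a$ for all $s$, the element $a\overline{e}$ is central, so the principal ideal $I=\gcrossedring\,(a\overline{e})$ is two-sided, nonzero, and every element of $I\cap\tilde{\aalg}$ is of the form $(ba)\overline{e}$, which annihilates the nonzero $c$ with $ac=\zeroa$ and hence lies in $\tilde{D}$ (including the case $ba=\zeroa$, since $\zeroa\in D$). What your route buys is brevity and transparency: no induced crossed system, no homomorphism to verify, and an explicit description of $I\cap\tilde{\aalg}$. What the paper's route buys is a reusable technique (it mirrors the homomorphism $\Gamma$ of Theorem \ref{explicitideal}) and a witness ideal concentrated in a nontrivial degree $g\neq e$, but for the statement at hand your argument is complete as written.
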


\begin{proof}[Proof by contrapositivity]
Let $\aalg$ be commutative. Suppose that $D \cap \aalg^G \neq \{ \zeroa \}$. Then there exist some $c \in D \setminus \{\zeroa\}$ such that $\sigma_s(c)=c$ for all $s \in G$. There is also some $d \in D \setminus \{\zeroa\}$, such that $c \cdot d = \zeroa$. Consider the ideal
\begin{displaymath}
\ann(c) = \{a \in \aalg \, \mid \, a\cdot c = \zeroa \}
\end{displaymath}
of $\aalg$. It is clearly non-empty since we always have $\zeroa \in \ann(c)$ and $d \in \ann(c)$. Let $\theta$ be the quotient homomorphism
\begin{displaymath}
\theta : \aalg \to \aalg / \ann(c), \quad a \mapsto a + \ann(c).
\end{displaymath}
Let us define a map $\rho : G \to \aut(\aalg / \ann(c))$ by
\begin{displaymath}
\quad \rho_s(a + \ann(c)) = \sigma_s(a) + \ann(c)
\end{displaymath}
for all $a + \ann(c) \in \aalg/\ann(c)$. Note that $\ann(c)$ is invariant under $\sigma_s$ for all $s \in G$ and thus it is easily verified that $\rho$ is a well-defined automorphism on $\aalg/\ann(c)$. By introducing the function
\begin{displaymath}
\beta : G \times G \to U(\aalg/\ann(c)), \quad (s,t) \mapsto (\theta \circ \alpha)(s,t)
\end{displaymath}
it is easy to verify that $\{\aalg/\ann(c),G,\rho,\beta\}$ is in fact a $G$-crossed system. Now consider the map

\begin{displaymath}
\Gamma : \gcrossedring \to \aalg/\ann(c) \rtimes_\beta^{\rho} G, \quad \sum_{s \in G} a_s \overline{s} \mapsto \sum_{s \in G} \theta(a_s) \overline{s}.
\end{displaymath}
For any two elements $\sum_{s \in G} a_s \overline{s}, \sum_{t \in G} b_t \overline{t} \in \gcrossedring$ the additivity of $\Gamma$ follows by

\begin{eqnarray*}
\Gamma \left(\sum_{s \in G} a_s \overline{s} + \sum_{t \in G} b_t \overline{t} \right) &=& \Gamma \left(\sum_{s \in G} (a_s + b_s) \overline{s} \right) = \sum_{s \in G} \theta(a_s + b_s) \overline{s} \\
&=& \sum_{s \in G} \theta(a_s) \overline{s} + \sum_{t \in G} \theta(b_t) \overline{t} = \Gamma \left(\sum_{s \in G} a_s \overline{s} \right) + \Gamma \left(\sum_{t \in G} b_t \overline{t} \right)
\end{eqnarray*}
and due to the assumptions, the multiplicativity follows by
\begin{eqnarray*}
\Gamma \left(\sum_{s \in G} a_s \overline{s} \sum_{t \in G} b_t \overline{t} \right) &=& \Gamma \left(\sum_{(s,t) \in G \times G} a_s \, \sigma_s(b_t) \, \alpha(s,t) \, \overline{st} \right) \\
&=& \sum_{(s,t) \in G \times G} \, \theta(a_s \, \sigma_s(b_t) \, \alpha(s,t) ) \, \overline{st} \\
&=& \sum_{(s,t) \in G \times G} \, \theta(a_s) \, \theta(\sigma_s(b_t)) \, \theta(\alpha(s,t)) \, \overline{s t} \\
&=& \sum_{(s,t) \in G \times G} \theta(a_s) \, \rho_s( \theta(b_t) ) \, \beta(s,t) \, \overline{s t} \\
&=& \left(\sum_{s \in G} \theta(a_s) \overline{s} \right) \left(\sum_{t \in G} \theta(b_t) \overline{t} \right) = \Gamma \left(\sum_{s \in G} a_s \overline{s} \right) \, \Gamma \left(\sum_{t \in G} b_t \overline{t} \right)
\end{eqnarray*}
where have used that $\beta = \theta \circ \alpha$ and $\theta(\sigma_s(b_t)) = \rho_s( \theta(b_t))$ for all $b_t \in \aalg$ and $s\in G$. This shows that $\Gamma$ is a ring homomorphism. Now, pick some $g \neq e$ and let $I$ be the ideal generated by $d\, \overline{g}$. Clearly $I \neq \{0\}$ and we see that $\Gamma\lvert_{I} \equiv 0$. Note that $\ker(\theta) = \ann(c)$ and in particular we have
\begin{displaymath}
\Gamma(a \overline{e}) = 0 \Longrightarrow a \in \ann(c).
\end{displaymath}
Take $m \, \overline{e} \in I \cap \left( \tilde{\aalg} \setminus \tilde{D} \right)$. Then $\Gamma(m \, \overline{e}) = 0$ and hence $m \in \ann(c) \subseteq D$, which is a contradiction. This shows that
\begin{displaymath}
I \cap \left( \tilde{\aalg} \setminus \tilde{D} \right) = \emptyset
\end{displaymath}
and by contrapositivity this concludes the proof.
\end{proof}

\begin{exmp}[The truncated quantum torus]
Let $q \in \complex \setminus \{0, 1\}$, $m \in \naturalnumbers$ and consider the ring
\begin{eqnarray*}
\frac{\complex[x,y,y^{-1}]}{(y \, x - q \, x \, y \, , \, x^m)}
\end{eqnarray*}
which is commonly referred to as the \emph{truncated quantum torus}. It is easily verified that this ring is isomorphic to $\gcrossedring$ with
\begin{itemize}
\item $\aalg = \complex[x]/(x^m)$,
\item $G = (\integers,+)$,
\item $\sigma_n : P(x) \mapsto P(q^n x)$, $n \in G$,
\item $\alpha(s,t) = \unita$ for all $s,t \in G$.
\end{itemize}
One should note that in this case $\aalg$ is commutative, but not an integral domain. In fact, the zero-divisors in $\complex[x]/(x^m)$ are precisely those polynomials where the constant term is zero, i.e. $p(x)=\sum_{i=0}^{m-1} a_i \, x^i$, with $a_i \in \complex$, such that $a_0=0$. It is also important to remark that, unlike the \emph{quantum torus}, $\gcrossedring$ is never simple (for $m>1$). In fact we always have a chain of two-sided ideals
\begin{displaymath}
\frac{\complex[x,y,y^{-1}]}{(y \, x - q \, x \, y \, , \, x^m)} \supset \langle x \rangle \supset \langle x^2 \rangle \supset \ldots \supset \langle x^{m-1} \rangle \supset \{0\}
\end{displaymath}
independent of the value of $q$. Moreover, the two-sided ideal $J = \langle x^{m-1} \rangle$ is contained in $\comm(\complex[x]/(x^m))$ and contains elements outside of $\complex[x]/(x^m)$. Hence we conclude that $\complex[x]/(x^m)$ is not maximal commutative in $\frac{\complex[x,y,y^{-1}]}{(y \, x - q \, x \, y \, , \, x^m)}$. When $q$ is a root of unity, with $q^n=1$ for some $n < m$, we are able to say more. Consider the polynomial $p(x) = x^n$, which is a non-trival zero-divisor in $\complex[x]/(x^m)$. For every $s \in \integers$ we see that $p(x)=x^n$ is fixed under the automorphism $\sigma_s$ and therefore, by Theorem \ref{idealszerodivisors}, we conclude that there exists a non-zero two-sided ideal in $\frac{\complex[x,y,y^{-1}]}{(y \, x - q \, x \, y \, , \, x^m)}$ such that its intersection with $\tilde{\aalg} \setminus \tilde{D}$ is empty.
\end{exmp}

\section{Comments to the literature}

The literature contains several different types of intersection theorems for group rings, Ore extensions and crossed products. Typically these theorems rely on heavy restrictions on the coefficient rings and the groups involved. We shall now give references to some interesting results in the literature.

It was proven in \cite[Theorem 1, Theorem 2]{Rowen} that the center of a semiprimitive (semisimple in the sense of Jacobson \cite{Jacobson}) P.I. ring respectively semiprime P.I. ring has a non-zero intersection with every non-zero ideal in such a ring. For crossed products satisfying the conditions in \cite[Theorem 2]{Rowen}, it offers a more precise result than Theorem \ref{killerproof} since $Z(\gcrossedring) \subseteq \comm(\tilde{\aalg})$. However, every crossed product need not be semiprime nor a P.I. ring and this justifies the need for Theorem \ref{killerproof}.

In \cite[Lemma 2.6]{LorenzPassman} it was proven that if the coefficient ring $\aalg$ of a crossed product $\gcrossedring$ is prime, $P$ is a prime ideal in $\gcrossedring$ such that $P \cap \tilde{\aalg} = 0$ and $I$ is an ideal in $\gcrossedring$ properly containing $P$, then $I \cap \tilde{\aalg} \neq 0$. Furthermore, in \cite[Proposition 5.4]{LorenzPassman} it was proven that the crossed product $\gcrossedring$ with $G$ abelian and $\aalg$ a $G$-prime ring has the property that, if $G_{\inn}=\{e\}$, then every non-zero ideal in $\gcrossedring$ has a non-zero intersection with $\tilde{\aalg}$.
It was shown in \cite[Corollary 3]{FisherMontgomery} that if $\aalg$ is semiprime and $G_{\inn}=\{e\}$, then every non-zero ideal in $\gcrossedring$ has a non-zero intersection with $\tilde{\aalg}$.
In \cite[Lemma 3.8]{LorenzPassman2} it was shown that if $\aalg$ is a $G$-prime ring, $P$ a prime ideal in $\gcrossedring$ with $P\cap \tilde{\aalg}=0$ and if $I$ is an ideal in $\gcrossedring$ properly containing $P$, then $I\cap \tilde{\aalg} \neq 0$. In \cite[Proposition 2.6]{MontgomeryPassman} it was shown that if $\aalg$ is a prime ring and $I$ is a non-zero ideal in $\gcrossedring$, then $I \cap (\aalg \rtimes_{\alpha}^{\sigma} G_{\inn}) \neq 0$. In \cite[Proposition 2.11]{MontgomeryPassman} it was shown that for a crossed product $\gcrossedring$ with prime ring $\aalg$, every non-zero ideal in $\gcrossedring$ has a non-zero intersection with $\tilde{\aalg}$ if and only if $C^t[G_{\inn}]$ is $G$-simple and in particular if $|G_{\inn}| < \infty$, then every non-zero ideal in $\gcrossedring$ has a non-zero intersection with $\tilde{\aalg}$ if and only if $\gcrossedring$ is prime.

Corollary \ref{maxcommcorollary} shows that if $\tilde{\aalg}$ is maximal commutative in $\gcrossedring$, without any further conditions on the coefficient ring or the group, we are able to conclude that every non-zero ideal in $\gcrossedring$ has a non-zero intersection with $\tilde{\aalg}$.

In the theory of group rings (crossed products with no action or twisting) the intersection properties of ideals with certain subrings have played an important role and are studied in depth in for example \cite{FormanekLichtman}, \cite{LorenzPassman3} and \cite{Passman2}. Some further properties of  intersections of ideals and homogeneous components in graded rings have been studied in for example \cite{CohenMontgomery}, \cite{MarubNauweOysta}.

For ideals in Ore extensions there are interesting results in \cite[Theorem 4.1]{Irving1D} and \cite[Lemma 2.2, Theorem 2.3, Corollary 2.4]{LaunLenaRiga}, explaining a correspondence between certain ideals in the Ore extension and certain ideals in its coefficient ring. Given a domain $\aalg$ of characteristic $0$ and a non-zero derivation $\delta$ it is shown in \cite[Proposition 2.6]{Irving2D} that every non-zero ideal in the Ore extension $R= \aalg [x; \delta]$ intersects $\aalg$ in a non-zero $\delta$-invariant ideal. 
Similar types of intersection results for ideals in Ore extension rings can be found in for example \cite{LeroyMatczuk} and \cite{McConnellRobson}.

The results in this article appeared initially in the preprint \cite{OinSil}.\\

\noindent \textbf{Acknowledgements.} We are grateful to Marcel de Jeu, Christian Svensson, Theodora Theohari-Apostolidi and especially Freddy Van Oystaeyen for useful discussions on the topic of this article.

\end{document}